\definecolor{LightCyan}{rgb}{0.88,1,1}
\definecolor{Gray}{gray}{0.9}
\newtheorem{theorem}{Theorem}
\newtheorem{lemma}[theorem]{Lemma}
\newtheorem{proposition}[theorem]{Proposition}
\newtheorem{example}[theorem]{Example}
\newtheorem{remark}[theorem]{Remark}
\newtheorem{definition}[theorem]{Definition}
\newcommand{\init}{{\rm in_\prec}}
\title{A generating set for the canonical ideal of HKG-curves}
\date{\today}
\author[A. Kontogeorgis]{Aristides Kontogeorgis}
\address{Department of Mathematics, National and Kapodistrian  University of Athens\\
Panepistimioupolis, 15784 Athens, Greece}
\email{kontogar@math.uoa.gr}
\urladdr{http://users.uoa.gr/~kontogar}
\author[I. Tsouknidas]{Ioannis Tsouknidas }
\address{Department of Mathematics, National and Kapodistrian University of Athens\\
Panepistimioupolis, 15784 Athens, Greece}
\email{iotsouknidas@math.uoa.gr}
\date \today
\newcommand{\aprod}{\mathop{\operator@font \hbox{\Large$\ast$}}}
\begin{document}
\begin{abstract}
The canonical ideal for Harbater Katz Gabber covers satisfying the conditions of Petri's theorem is studied
and an explicit non-singular model of the above curves is given. 
\end{abstract}

\maketitle


\section{Introduction}

The study of the canonical embedding and the determination of the canonical ideal is a classical subject in algebraic geometry, see \cite[III.3]{MR770932}, \cite{Saint-Donat73}, \cite[p. 20]{MR0419430}, \cite{MR943539} for a modern account.  On the other hand Harbater-Katz-Gabber curves (HKG-curves for short) 
grew out mainly due to work of Harbater \cite{harbater1980moduli} and of Katz and Gabber \cite{katz1986local}.  They are important because of the Harbater–Katz–Gabber compactification theorem of Galois actions on complete local rings and they 
proved to be an important tool in the study of local actions and the deformation theory of curves with automorphisms, see \cite{MR3651589}, \cite{MR2441248}, \cite{MR2919977}, \cite{Obus12}, \cite{ObusWewers}, \cite{MR3194816}, \cite{Karanikolopoulos2013-vg}, \cite{1901.08446}.

In \cite{1909.10282} we have studied the relation of the canonical ideal  
of a given curve and the action of the automorphism group on the space of holomorphic differentials.
It is expected that a lot of information of the deformation of the action is hidden in the canonical ideal, see also \cite{KaranProc}, \cite{1905.05545}. 

In this article we  will work over an 
 algebraically closed field  $k$  of characteristic $p>0$. 
Our aim is to calculate the canonical ideal of an HKG-curve $X/k$. 
In order to do so we  use a recent result by Haralampous et al. \cite{1905.05545} integrated here as proposition \ref{lemma 1} and  
 we employ the breakdown process of an HKG-curve into Artin-Schreier extensions as described in \cite{Karanikolopoulos2013-vg} and \cite{1901.08446}, 
while also expanding our understanding of the generating elements (section \ref{sec:preliminaries}). We will assume that the Galois group of the HKG-cover $X\rightarrow \mathbb{P}^1$ is a $p$-group. 
In this process we will use the symmetric  Weierstrass semigroup $H$ at the unique ramification point together with the explicit bases of polydifferentials based on the semigroup given in \cite[prop. 42]{Karanikolopoulos2013-vg}. 

We interpret the equations of the intermediate Artin-Schreier extensions as equations of quadratic differentials defining a set of relations $G_0$ and $G_{\bar{v},i}$, which we prove that are part of the canonical ideal, see prop. \ref{defofG0} and \ref{defofG1}. 
Notice that 
in order to be able to generate the canonical ideal by quadratic polynomials we have to assume that all intermediate extensions
 satisfy the assumptions of Petri's theorem, see lemma \ref{PetriCondNeed}.



In the last section (\ref{sec:Artin-Schreier}) we give several examples illustrating our construction. Those examples are used to facilitate the fact that, despite the possibly complicated definition of the generating sets (along with the proof), in specific computations they behave far more conveniently.

\section{Preliminaries}\label{sec:preliminaries}

A Harbater-Katz-Gabber cover (HKG-cover for short) is a Galois cover $X_{\mathrm{HKG}}\rightarrow \mathbb{P}^1$, such that there are at most two  branched $k$-rational points $P_1,P_2 \in \mathbb{P}^1$, where $P_1$ is tamely ramified and $P_2$ is totally and wildly  ramified. All other geometric points of $\mathbb{P}^1$ remain unramified. In this article we are mainly interested in $p$-groups so our HKG-covers have a unique ramified point
$P$, which is totally and wildly ramified.



On the other hand, the canonical ideal is described in terms of the  next theorem which is given following  Saint-Donat's \cite{Saint-Donat73} formulation:
\begin{theorem}[Max Noether-Enriques-Petri] \label{petristhm}
Under our initial assumptions (i.e. $X$ is complete non-singular non-hyperelliptic of genus $\geq 3$, $k$ is algebraically closed) then the following hold;
\begin{enumerate}
\item The canonical map 
\[\phi:\mathrm{Sym}(H^0(X,\Omega_X))\to \bigoplus_{n\geq 0}H^0(X,\Omega_X^n)\]
is surjective ($\mathrm{Sym}$ stands for the symmetric algebra).
\item The kernel of $\phi$, $I$, is generated by elements of degree $2$ and $3$.
\item $I$ is generated by elements of degree $2$ except in the following cases;
\begin{enumerate}
\item $X$ is trigonal 
\item $X$ is a plane quintic $(g=6)$.
\end{enumerate}
\end{enumerate}
\end{theorem}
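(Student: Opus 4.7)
The plan is to prove the three parts in sequence, following the strategy of Saint-Donat~\cite{Saint-Donat73} that reorganises Petri's original analysis.

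For part (1), the classical theorem of Max Noether, the target is to show that the graded ring $R = \bigoplus_{n\ge 0} H^{0}(X,\Omega_X^{n})$ is generated in degree~$1$. The standard route is the base-point-free pencil trick: since $X$ is non-hyperelliptic, $\Omega_X$ is very ample, and it suffices to prove that for every $n\ge 1$ the multiplication map $H^{0}(\Omega_X)\otimes H^{0}(\Omega_X^{n})\to H^{0}(\Omega_X^{n+1})$ is surjective; inducting on $n$ then yields surjectivity of $\phi$. One applies the pencil trick to a $g^{1}_{k}$ of minimal degree $k$ (the gonality of $X$) and invokes Clifford's theorem to guarantee the required cohomology vanishings.

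For parts (2) and (3), one analyses the homogeneous ideal $I=\ker\phi$ directly. Riemann--Roch controls the Hilbert function of $R$, so $\dim I_{2}$ and $\dim I_{3}$ are computable. One fixes a point $P\in X$ and a pencil realising the gonality, and then chooses a basis of $H^{0}(X,\Omega_X)$ adapted to the filtration by vanishing order at~$P$. With this normalisation one writes down explicit quadrics through the canonical image and verifies, via a Gr\"obner-type argument on the associated graded ring, that they generate $I$ in every degree $\ge 2$ unless a specific geometric obstruction is present. The obstruction turns out to be the existence of a $g^{1}_{3}$ (the trigonal case) or of a $g^{2}_{5}$ (the plane quintic with $g=6$); in either case the canonical curve lies on a surface of minimal degree inside $\mathbb{P}^{g-1}$---a rational normal scroll or the Veronese surface in $\mathbb{P}^{5}$, respectively---and the ideal of that surface contributes cubic generators that cannot be written in terms of quadrics in~$I$.

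The main obstacle is the syzygy bookkeeping in parts~(2) and~(3): one has to show that every candidate cubic relation reduces modulo $I_{2}$ unless one of the two exceptional geometric configurations occurs. Equivalently, one must prove that the ``quadrics of rank $\le 4$'' through the canonical curve cut it out scheme-theoretically outside the two exceptional cases, and it is precisely at this step that Petri's combinatorial analysis of the multiplication tables, together with Saint-Donat's cleaner reformulation, performs the decisive work.
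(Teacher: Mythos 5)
The paper does not prove this statement: Theorem~\ref{petristhm} is the classical Max Noether--Enriques--Petri theorem, and the paper records it verbatim as a known result, citing Saint-Donat~\cite{Saint-Donat73} for the proof, so there is nothing in the paper against which to compare your argument line by line. Your write-up is therefore best judged on its own as a reconstruction of Saint-Donat's argument, and as such it is a reasonable high-level roadmap but not a proof; let me flag where the real work lives and where the sketch is imprecise.

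For part (1), the base-point-free pencil trick and an induction on degree is indeed the right mechanism, but one has to be careful about which pencil is used and which $h^1$-vanishings are available; Clifford's theorem alone does not immediately hand you what is needed, and Saint-Donat's actual argument uses Castelnuovo-style free pencil lemmas together with a careful choice of auxiliary divisors. For parts (2) and (3), your description of the normalisation is not the one Petri and Saint-Donat use: the standard Petri set-up chooses $g-2$ points in general position (a ``general'' canonical divisor minus two points), not a filtration of $H^0(\Omega_X)$ by vanishing order at a single point $P$ --- the latter is a Weierstrass-flag normalisation and, while useful in other contexts (indeed it is the spirit of the semigroup-based constructions in this very paper), it is not what makes Petri's multiplication-table analysis go through. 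The phrase ``verifies via a Gr\"obner-type argument'' conceals precisely the hard part: one must show that the quadrics of rank at most four arising from the pencil trick, plus finitely many cubics, generate, and then show the cubics are redundant unless the rank-$\le 4$ quadrics cut out a surface of minimal degree (scroll or Veronese) rather than the curve itself. Finally, since the present paper works over an algebraically closed field of characteristic $p>0$, any reconstruction needs to note that Saint-Donat's treatment is characteristic-free --- this is exactly why the paper cites him and not a characteristic-$0$ source --- and your sketch does not address this; for instance, ``general position'' arguments and the identification of the exceptional surfaces must be made without appealing to generic smoothness or Bertini in its characteristic-$0$ form.

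In short: the outline is in the right spirit and names the right geometric objects (scroll in the trigonal case, Veronese in the plane-quintic case), but as written it defers every genuinely delicate step to a gesture, and the stated normalisation is not Petri's. If you want a self-contained proof you would essentially be re-deriving Saint-Donat's paper; for the purposes of the present article it is both standard and appropriate to cite it, as the authors do.
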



\noindent
\begin{minipage}[l]{0.6\textwidth}
It is known  \cite{Karanikolopoulos2013-vg}, \cite{1901.08446} that an HKG curve is defined by a series of extensions
 $F_{i+1}=F_i(\bar{f}_i)$, 
where the irreducible polynomials of $\bar{f}_i$ are known to be of the form
\begin{equation}
\label{defeqi}
X^{p^{n_i}}+a^{(i)}_{n_i-1}X^{p^{n_i-1}}+\dots+a^{(i)}_{0} X- D_i
\end{equation}
for some element $D_i\in F_i$. 

The Weierstrass semigroup $H$ is generated by the elements
$\{|G_0|, \bar{m}_1,\dots, \bar{m}_k\}$
where $\bar{m}_i=p^{n_{i+1}+\dots+n_{k}}b_i$.
 Notice that the ramification groups are given by $|G_{b_{i+1}}|=p^{n_{i+1}+\dots+n_{k}}$
and they form the following filtration sequence
\[
G_0(P)=G_1(P)=\cdots=G_{b_1}(P)\gneqq 
G_{b_1+1}(P)=\cdots
\]
\[
\cdots=
G_{b_2}(P)\gneqq\cdots\gneqq G_{b_\mu}(P)\gneqq\{1\}.
\]
 We know that 
 $(b_i,p)=1$ and $|G_0|=p^{n_1+\dots+n_k}$, see \cite{Karanikolopoulos2013-vg}, \cite{1901.08446}. 
\end{minipage}
\begin{minipage}[r]{0.4\textwidth}
\[
\xymatrix{
  F=F_{k+1}=F_{k}(\bar{f}_{k}) \ar@{-}[d]^{p^{n_k}} 
\ar@{-}@/_5pc/_{|G_0|}[dddd]
  \\
  F_{k}=F_{k-1}(\bar{f}_{k-1}) \ar@{-}[d]^{p^{n_{k-1}}} \\
  F_{k-1}  \ar@{.}[d] \\
F_2=F_1(\bar{f}_1)\ar@{-}[d]^{p^{n_1}}\\
  F_1=k(\bar{f}_0) \ar@{=}[d] \\
  F_0=F^{G_1(P)}
}
\]
\end{minipage}
The above subset of the Weierstrass semigroup might not be the minimal set of generators, 
since 
this depends on whether $G_1(P)$ equals $G_2(P)$, see \cite[thm. 13]{Karanikolopoulos2013-vg}.
We will denote by 
\[
H_s=
\{
h: h \in H, h\leq s(2g-2)
\}
\]
the part of the  Weierstrass semigroup bounded by $s(2g-2)$.
We will also denote by $\mathbf{A}$ the set 
\begin{equation}
\label{Adef}
\mathbf{A}=\{(i_0,\ldots,i_k)  \in \mathbb{N}^{k+1}:  
i_0 |G_0|+\sum_{\nu=1}^k
i_\nu \bar{m}_\nu \leq 2g-2
\}.
\end{equation}
For each $h\in H_1$ we will select a fixed element $f_h$ with unique pole at $P$ of order $h$.
The sets $H_1$ and $\mathbf{A}$ have the same cardinality and moreover the map 
\begin{align}
\label{f-to-diff}
H_s \ni h 
&\longmapsto  
f_h df_0^{\otimes s},
\end{align}
gives rise to a basis of $H^0(X,\Omega^{s})$, see 
\cite[prop. 42]{Karanikolopoulos2013-vg}. This implies that the cardinality of $H_s$ is given by 
\[
\#H_s=
\begin{cases}
g & \text{ if } s = 1
\\
(2s-1)(g-1) 
& \text{ if } s>1.
\end{cases}
\]
We will denote by $\mathbb{T}^2$ 
 the 
monomials of $\mathrm{Sym}H^0(X,\Omega_X)$ of degree two (i.e. of the form $\omega_L\omega_K$). 
For a graded ring $S$ we will use $(S)_2$ to denote elements of degree $2$.

The information of the succesive extensions is encoded in the coefficients $a^{(i)}_{j}$ of the additive left part of eq. (\ref{defeqi}) \emph{and} in the elements $D_i\in F_{i-1}$. 
By  eq. (\ref{defeqi}), we know that the valuation of 
$D_{i}$ is $-p^{n_{i}}\bar{m}_{i}$. Since $D_{i}$ belongs to $F_{i}=F^{G_1(P)}(\bar{f}_1,\dots,\bar{f}_{i-1})$ and $F^{G_1(P)}=k(\bar{f}_0)$ (see \cite[remark 21]{Karanikolopoulos2013-vg}), one can express $D_{i}$ as 
\begin{equation}
\label{Ddef}
D_{i}(\bar{f}_0,\dots,\bar{f}_{i-1})
=
\sum_{ 
(\ell_0,\dots,\ell_{i-1}) \in \mathbb{N}^{i}
}
 a_{\ell_0,\dots,\ell_{i}}^{(i)}\bar{f}_0^{\ell_0}\dots \bar{f}_{i-1}^{\ell_{i-1}}.
\end{equation}
We will need the following
\begin{lemma}
\label{eqnorm}
Assume that $(\ell_0,\dots,\ell_{i-1}),(w_0,\dots, w_{i-1}) \in \mathbb{N}^{i}$ such that 
\begin{equation}\label{hypothesis}
1\leq \ell_\lambda, w_\lambda<p^{n_\lambda} \text{ for all } 1\leq \lambda \leq i-1
\end{equation}
and
\begin{equation}\label{eqlem}
\ell_0|G_0|+\ell_1\bar{m}_1+\dots+\ell_{i-1}\bar{m}_{i-1}=w_0|G_0|+w_1\bar{m}_1+\dots+w_{i-1}\bar{m}_{i-1}.
\end{equation}
Then $(\ell_0,\dots,\ell_{i-1})=(w_0,\dots, w_{i-1})$.
\end{lemma}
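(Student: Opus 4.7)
The plan is to exploit the $p$-adic structure of the generators $|G_0|,\bar{m}_1,\dots,\bar{m}_{i-1}$. Recall $|G_0|=p^{n_1+\dots+n_k}$ and $\bar{m}_\nu=p^{n_{\nu+1}+\dots+n_k}b_\nu$ with $(b_\nu,p)=1$. In particular, all of $|G_0|,\bar{m}_1,\dots,\bar{m}_{i-1}$ share the common factor $p^{n_i+\dots+n_k}$. Dividing both sides of \eqref{eqlem} by this common factor produces an equivalent equation
\[
\ell_0\, p^{n_1+\dots+n_{i-1}}+\sum_{\nu=1}^{i-1}\ell_\nu\, p^{n_{\nu+1}+\dots+n_{i-1}}\, b_\nu\;=\;w_0\, p^{n_1+\dots+n_{i-1}}+\sum_{\nu=1}^{i-1}w_\nu\, p^{n_{\nu+1}+\dots+n_{i-1}}\, b_\nu,
\]
with the convention that the exponent $n_{\nu+1}+\dots+n_{i-1}$ is empty (equals $0$) when $\nu=i-1$. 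Thus only the terms $\ell_{i-1}b_{i-1}$ and $w_{i-1}b_{i-1}$ survive modulo $p^{n_{i-1}}$.

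The key step is the reduction modulo $p^{n_{i-1}}$: it yields $\ell_{i-1}b_{i-1}\equiv w_{i-1}b_{i-1}\pmod{p^{n_{i-1}}}$, and since $(b_{i-1},p)=1$ the element $b_{i-1}$ is invertible in $\mathbb{Z}/p^{n_{i-1}}\mathbb{Z}$, giving $\ell_{i-1}\equiv w_{i-1}\pmod{p^{n_{i-1}}}$. The hypothesis \eqref{hypothesis} now upgrades this congruence to equality: both $\ell_{i-1}$ and $w_{i-1}$ lie in the interval $[1,p^{n_{i-1}})$, so $\ell_{i-1}=w_{i-1}$. Cancelling the two equal end terms and dividing by $p^{n_{i-1}}$ produces an equation of exactly the same shape, but with the top index reduced from $i-1$ to $i-2$. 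A descending induction on the index then yields $\ell_\nu=w_\nu$ for every $\nu=i-1,i-2,\dots,1$. At the final stage one is left with the plain equality $\ell_0\, p^{n_1}=w_0\, p^{n_1}$, which gives $\ell_0=w_0$ (no upper-bound hypothesis on $\ell_0,w_0$ is required because the last step is an equation, not a congruence).

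The only real obstacle is keeping the bookkeeping of $p$-valuations straight: one must check at each stage of the induction that the exponent stripped off matches the modulus used in the congruence argument, so that the congruence is sharp enough to be converted into equality by the bound $\ell_\lambda,w_\lambda<p^{n_\lambda}$. Once this matching is set up (the common factor $p^{n_i+\dots+n_\lambda}$ is factored out at the stage when $\ell_\lambda$ is to be isolated), the argument reduces to iterated use of the invertibility of $b_\lambda$ modulo powers of $p$, which is immediate from $(b_\lambda,p)=1$.
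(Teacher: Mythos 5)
Your proof is correct and takes essentially the same route as the paper's: both divide out the common factor $p^{n_i+\cdots+n_k}$, isolate the top index by reducing modulo $p^{n_{i-1}}$, use $(b_{i-1},p)=1$ to conclude $\ell_{i-1}\equiv w_{i-1}\pmod{p^{n_{i-1}}}$, upgrade this to equality via the bound $1\le \ell_{i-1},w_{i-1}<p^{n_{i-1}}$, cancel, and iterate. The only cosmetic difference is that the paper phrases it as a proof by contradiction while you run a direct descending induction.
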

\begin{proof}
Assume that 
 $(\ell_0,\dots,\ell_{i-1})\neq(w_0,\dots, w_{i-1})$.
We have by assumption, 
after cancelling out $p^{n_i+\cdots+n_k}$ from both sides, 
\begin{equation}
\begin{split}
\ell_0p^{n_1+\dots+n_{i-1}}+\sum_{v=1}^{i-2}\ell_{v}&p^{n_{v+1}+\dots+n_{i-1}}b_{v}+\ell_{i-1}b_{i-1}=\\
&=w_0p^{n_1+\dots+n_{i-1}}+\sum_{v=1}^{i-2}w_{v}p^{n_{v+1}+\dots+n_{i-1}}b_{v}+w_{i-1}b_{i-1}.
\end{split}
\end{equation}
By the coprimality of $b_{i-1}$ and $p$ we get that $p^{n_{i-1}}$ divides $w_{i-1}-\ell_{i-1}$. Suppose that the last difference is not zero and assume without loss of generality  that it is positive i.e.
\[
w_{i-1}-\ell_{i-1}=\lambda p^{n_{i-1}},\, \lambda>0.
\]
Then $w_{i-1}$ is strictly greater than $p^{n_{i-1}}$ which contradicts the inequality (\ref{hypothesis}) so we must have $w_{i-1}=\ell_{i-1}$. Cancelling them out in eq. \ref{eqlem} allows us to perform the same procedure yielding $w_{i-2}=\ell_{i-2}$. 
After a finite number of repetitions we get $w_1=\ell_1$ which means that also $w_0$ equals $\ell_0$, a contradiction since the elements were assumed different.
\end{proof}

The following lemma allows us to manipulate these elements;
\begin{lemma}\label{mainlem}
Let $F=F_{k+1}$ be the top field, with generators $\bar{f}_i$, $i=0,\dots, k$ and associated irreducible polynomials $A_i$ as in equation (\ref{defeqi});
\[
A_i(X)=X^{p^{n_i}}+a_{n_i-1}^{(k)} X^{p^{n_i-1}}+\dots+a_0^{(k)} X- D_i,
\]
where  $D_i$ is given  in equation (\ref{Ddef}),

\[
D_{i}(\bar{f}_0,\dots,\bar{f}_{i-1})
=
\sum_{ 
(\ell_0,\dots,\ell_{i-1}) \in \mathbb{N}^{i}
}
 a_{\ell_0,\dots,\ell_{i}}^{(i)}\bar{f}_0^{\ell_0}\dots \bar{f}_{i-1}^{\ell_{i-1}}.
\]

 Then  one of the monomials $\bar{f}_0^{\ell_0}\dots \bar{f}_{i-1}^{\ell_{i-1}}$ has also pole divisor
$p^{n_i}\bar{m}_i P$  and this holds for all $i=1,\dots,k$.
\end{lemma}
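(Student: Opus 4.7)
Two ingredients drive the proof: equation \eqref{defeqi} forces the valuation $v_P(D_i)=-p^{n_i}\bar m_i$, while Lemma \ref{eqnorm} rules out two admissible monomials having the same pole order at $P$. Combining these, exactly one term in a reduced form of \eqref{Ddef} must realise the target pole divisor.

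\textbf{Step 1.} First I would put the expansion \eqref{Ddef} into canonical form, namely one in which $0\le\ell_\lambda<p^{n_\lambda}$ for every $1\le\lambda\le i-1$. The relations $A_\lambda(\bar f_\lambda)=0$ (with $\deg A_\lambda=p^{n_\lambda}$) let one replace $\bar f_\lambda^{p^{n_\lambda}}$ by strictly lower powers of $\bar f_\lambda$ together with $D_\lambda\in F_\lambda$ (itself already polynomial in $\bar f_0,\ldots,\bar f_{\lambda-1}$); iterating downwards from $\lambda=i-1$ terminates at the canonical form.

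\textbf{Step 2.} Each $\bar f_\lambda$ has a unique pole at $P$, of order $|G_0|$ when $\lambda=0$ and of order $\bar m_\lambda$ when $\lambda\ge 1$, so
\[
-v_P\bigl(\bar f_0^{\ell_0}\cdots\bar f_{i-1}^{\ell_{i-1}}\bigr)=\ell_0|G_0|+\sum_{\nu=1}^{i-1}\ell_\nu\bar m_\nu.
\]
Lemma \ref{eqnorm}, whose argument uses only the upper bound $\ell_\lambda<p^{n_\lambda}$ and therefore applies equally well to the range $0\le\ell_\lambda<p^{n_\lambda}$, shows that distinct admissible tuples produce distinct pole orders at $P$.

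\textbf{Step 3 and main obstacle.} With all nonzero summands in the canonical expansion of $D_i$ having pairwise distinct valuations at $P$, the strict triangle rule for non-archimedean valuations forces
\[
-p^{n_i}\bar m_i \;=\; v_P(D_i) \;=\; -\max\Bigl\{\ell_0|G_0|+\sum_{\nu=1}^{i-1}\ell_\nu\bar m_\nu \,:\, a^{(i)}_{\ell_0,\ldots,\ell_{i-1}}\ne 0\Bigr\},
\]
and the maximising tuple supplies the monomial with pole divisor exactly $p^{n_i}\bar m_i\,P$. The only delicate point is Step 1: one needs to be sure that iterated substitution via $A_\lambda=0$ does not artificially inflate pole orders. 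This is automatic because each such substitution replaces $\bar f_\lambda^{p^{n_\lambda}}$ by an expression whose highest-order pole summand is $D_\lambda$, of the same valuation $-p^{n_\lambda}\bar m_\lambda$ as $\bar f_\lambda^{p^{n_\lambda}}$ itself, so the maximal pole order of the full expression is preserved throughout the reduction and Lemma \ref{eqnorm} applies directly to the final reduced form.
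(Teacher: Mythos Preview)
Your proof is correct and follows the same approach as the paper: reduce the expansion of $D_i$ so that each exponent satisfies $0\le\ell_\lambda<p^{n_\lambda}$ via the defining relations, then combine Lemma~\ref{eqnorm} with the strict triangle inequality to force a monomial of pole order exactly $p^{n_i}\bar m_i$. The only cosmetic difference is that the paper argues by contradiction (no such monomial $\Rightarrow$ two monomials share a valuation $\Rightarrow$ contradiction with Lemma~\ref{eqnorm}), whereas you argue directly (Lemma~\ref{eqnorm} $\Rightarrow$ all monomials have distinct valuations $\Rightarrow$ one of them equals $v_P(D_i)$); your remark that Lemma~\ref{eqnorm} only needs $\ell_\lambda<p^{n_\lambda}$, not the stated lower bound $1\le\ell_\lambda$, is accurate and worth noting.
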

\begin{proof}
Recall that $D_i\in F_i$, $\bar{f}_i\in F_{i+1}-F_i$ and the pole divisor of $D_i$ is $p^{n_i}\bar{m}_i P$. 
Suppose on the contrary (for $D_i$) that, none
of the monomial summands of $D_i$ has pole divisor of the desired order, $p^{n_i}\bar{m}_iP$. 
In other words,
\[\ell_0|G_0|+\ell_1\bar{m}_1+\dots+\ell_{i-1}\bar{m}_{i-1}\neq p^{n_i}\bar{m}_i\]
for all $\ell_0,\dots,\ell_{i-1}$ appearing as exponents. We can assume that   $\ell_\lambda, w_\lambda$ satisfy the inequality of eq. (\ref{hypothesis})
 for all exponents of all monomial summands of $D_i$ since, otherwise, we can substitute the corresponding element $\bar{f}_\lambda^{\ell_\lambda}$ with terms of smaller exponents because of its irreducible polynomial,
 see also eq. (\ref{defeqi}).

By the strict triangle inequality  there will be at least two different monomials 
$\bar{f}_0^{\ell_0}\dots \bar{f}_{i-1}^{\ell_{i-1}}$,
 $\bar{f}_0^{w_0}\dots \bar{f}_{i-1}^{w_{i-1}}$ in the sum of $D_i$ sharing the same valuation and the 
contradiction
 follows by lemma \ref{eqnorm}.
\end{proof}

%
%
%
%
%
%
\section{Preparation for the main theorem}
\label{sec:numberingbasiselements}
\noindent Define the Minkowski sum (recall the definition of $\mathbf{A}$ given in eq. (\ref{Adef}))
\[
\mathbf{A}+\mathbf{A}=\{L+K: L,K \in \mathbf{A}\},
\]
where $L+K=(i_0+j_0,\dots,i_k+j_k)$ for $L=(i_0,\dots,i_k)$, $K=(j_0,\dots, j_k)$.
There is a natural map 
\begin{equation}
\label{valMap}
\mathbb{N}^{k+1}  \ni (i_0,i_1,\ldots,i_k)=\bar{h}
 \longmapsto 
||\bar{h}||=i_0 |G_0|+\sum_{\nu=1}^k
i_\nu \bar{m}_\nu \in H_2 \subset \mathbb{N},
\end{equation}
which restricts to  the map
\begin{align}
\label{normMap}
\mathbf{A} + \mathbf{A} & 
\stackrel{||\cdot||}{\longrightarrow} H_2 \\
L+K & \longmapsto \begin{pmatrix}
L+K
\end{pmatrix} \begin{pmatrix}
|G_0|
\\
\nonumber
\bar{m}_1\\ \vdots\\ \bar{m}_k
\end{pmatrix}=(i_0+j_0)|G_0|+\sum_{v=1}^k(i_v+j_v)\bar{m}_v.
\end{align}

The map given in eq. (\ref{normMap}) is not one to one. 
In order to bypass that we introduce a suitable equivalence relation $\sim$ on $\mathbf{A}+\mathbf{A}$ so that there is a bijection 
\[
 \psi: \mathbf{A}+\mathbf{A}/\sim\longrightarrow H_2':=\mathrm{Im} \psi \subset H_2.
\]
\begin{definition}
\label{defofequivalencerelation}
Define the equivalence relation $\sim$ on $\mathbf{A}+\mathbf{A}$, by the rule
$$(L+K)\sim (L'+K') \text{ if and only if } 
||L+K||=||L'+K'||.
$$
\end{definition}
The function $\psi$ together with eq. (\ref{f-to-diff})
allows us to express a quadratic differential $\omega_h$ corresponding to an element $h \in H_2'$ as an element in $\mathbf{A}+\mathbf{A}$ by selecting a representative $L+K \in \mathbf{A}+\mathbf{A}$ of the class of $\psi(f)$. 
That is for every element $h\in H_2'$ we can write 
\begin{equation}
\label{sim-representative}
\psi([L_h+K_h])=h \text{ for certain elements } L_h,K_h\in \mathbf{A}.
\end{equation}

It is clear 
by our definitions
that the following equality holds.
\begin{equation}
\label{associationofbasisandequivalenceclasses}
  \left|\frac{\mathbf{A}+\mathbf{A}}{\sim}\right|=|H_2'| \leq |H_2|=3g-3.
\end{equation}
as we mentioned in the introduction, the reasons for the definition of the equivalence relation will be clear later but the curious reader may check proposition (\ref{lemma 1}).
 We will need the following:

\begin{lemma}\label{defofGamma}
The equivalence class of the element $L+K=(i_0+j_0,\ldots,i_k+j_k)\in \mathbf{A}+\mathbf{A}$ corresponds to the following set of degree $2$ monomials
\begin{equation*}
\Gamma_{L+K}=
\left\{\begin{aligned}&\omega_{A}\omega_{B}\in \mathrm{Sym }H^0(X,\Omega_X):
\text{ for }
A=(a_0,\ldots,a_k), B=(b_0,\ldots, b_k)\\
&  \text{such that:}\\
&((a_0+b_0)-(i_0+j_0))|G_0|+
\sum_{v=1}^{k-1}
\big(
a_v+b_v-(i_v+j_v)
\big)
\bar{m}_v=\lambda\bar{m}_kp^{n_k}\\
&\text{and }
a_k+b_k-(i_k+j_k)=-\lambda p^{n_k}
\text{ for some }\lambda\in \mathbb{Z}\end{aligned}\right\},
\end{equation*}
\end{lemma}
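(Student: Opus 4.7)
The plan is as follows. By Definition \ref{defofequivalencerelation}, the class of $L+K$ under $\sim$ consists of those $A+B\in\mathbf{A}+\mathbf{A}$ with $||A+B||=||L+K||$, and this single equation (spelled out via eq.~(\ref{valMap})) reads
\[
\sum_{v=0}^{k}\big((a_v+b_v)-(i_v+j_v)\big)c_v = 0,
\]
where I write $c_0=|G_0|$ and $c_v=\bar{m}_v$ for $v\ge 1$. The definition of $\Gamma_{L+K}$ is merely a rearrangement of this equation after separating out the $v=k$ term; the only genuine content is that the auxiliary scalar $\lambda$ appearing there can indeed be taken in $\mathbb{Z}$. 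So the task reduces to proving the equivalence of the displayed equation with the pair of conditions parametrised by $\lambda\in\mathbb{Z}$.

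One direction is immediate. If both conditions of $\Gamma_{L+K}$ hold for some integer $\lambda$, then adding the first to $\bar{m}_k$ times the second yields $||A+B||-||L+K||=\lambda\bar{m}_k p^{n_k}-\lambda p^{n_k}\bar{m}_k=0$, so $A+B\sim L+K$.

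For the converse I would begin from $||A+B||=||L+K||$ and isolate the $v=k$ summand, obtaining
\[
\sum_{v=0}^{k-1}\big((a_v+b_v)-(i_v+j_v)\big)c_v = -(a_k+b_k-(i_k+j_k))\,\bar{m}_k.
\]
The heart of the proof is a $p$-adic divisibility check on this identity. Using the explicit formulas $|G_0|=p^{n_1+\dots+n_k}$ and $\bar{m}_v=p^{n_{v+1}+\dots+n_k}b_v$ recorded in Section~\ref{sec:preliminaries}, every coefficient $c_v$ with $v\le k-1$ is divisible by $p^{n_k}$, so the left-hand side lies in $p^{n_k}\mathbb{Z}$. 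On the right-hand side $\bar{m}_k=b_k$ with $\gcd(b_k,p)=1$, and this forces $p^{n_k}\mid a_k+b_k-(i_k+j_k)$. Defining $\lambda\in\mathbb{Z}$ by $a_k+b_k-(i_k+j_k)=-\lambda p^{n_k}$ and substituting back produces the first displayed condition of $\Gamma_{L+K}$.

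The only obstacle worth flagging is this divisibility step; the rest is routine unpacking of the definitions. Note also that there is no ambiguity coming from distinct pairs $(A,B)\neq(A',B')$ with $A+B=A'+B'$, because the characterisation is phrased at the level of the sum $A+B$ rather than on the individual components.
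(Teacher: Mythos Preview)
Your proposal is correct and follows essentially the same approach as the paper: both arguments isolate the $v=k$ summand in the equality $||A+B||=||L+K||$, observe that every $c_v$ with $v\le k-1$ is divisible by $p^{n_k}$ (from $|G_0|=p^{n_1+\cdots+n_k}$ and $\bar{m}_v=p^{n_{v+1}+\cdots+n_k}b_v$), and use $\gcd(\bar{m}_k,p)=\gcd(b_k,p)=1$ to force $p^{n_k}\mid a_k+b_k-(i_k+j_k)$. Your write-up is in fact cleaner than the paper's, which omits the easy direction and contains minor typos.
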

\begin{proof}
The equivalence class of $L+K$ is a subset of $\mathbf{A}+\mathbf{A}$ which corresponds to holomorphic 
differentials as described below:
Notice first that two equivalent elements $L+K$, $L'+K'$ satisfy 
\[
(i_0+j_0-(i_0'+j_0'))|G_0|+\sum_{v=1}^k(i_v+j_v-(i_v'+j_v))\bar{m}_v=0
\]
which, combined with the facts that $(\bar{m}_v,p)=1$ and $\bar{m}_i=p^{n_{i+1}+\dots+n_{k}}b_i$ yields that there is an integer $\lambda$ such that 
\begin{align}\label{equivalceconditions}
\big(
i_0+j_0-(i_0'-j_0')
\big)
p^{|G_0|-n_k}+\sum_{v=1}^{k-1}(i_v+j_v-(i_v'+k'_v))\frac{\bar{m}_v}{p^{n_k}}&=\lambda \bar{m}_k \\
\text{and }i_k'+j_k'-(i_k+j_k)&=\lambda p^{n_k}.
\end{align}
\end{proof}

\begin{remark}
By Petri's theorem  the canonical map $\phi$ (check eq. (\ref{petristhm})) 
 maps 
a degree $2$ polynomial  to $f_h df_0^{\otimes 2} \in H^0(X,\Omega_X^{\otimes 2})$, 
that is 
\begin{equation}
\label{Petri2express}
\phi
\left(
\sum_{\nu} a_\nu \omega_{L_\nu} \omega_{K_\nu}
\right)
= f_h df_0^{\otimes 2}, \qquad a_\nu \in k. 
\end{equation}
It is not correct that
a holomorphic $2$-differential
$f_h df_0^{\otimes 2}$ is the image of a single element $\omega_L \omega_K$.  
Indeed, for the genus $9$ Artin-Schreier curve 
\[
y^7-y=x^4
\]
a basis for the set of holomorphic differentials is 
derived by the set 
\[
\mathbf{A}=
\left\{\left[0, 0\right], \left[0, 1\right], \left[0, 2\right], \left[0, 3\right], \left[0, 4\right], \left[1, 0\right], \left[1, 1\right], \left[1, 2\right], \left[2, 0\right]\right\}
\]
\begin{align*}
\omega_{0,0}=x^{0}y^{0} dx,\quad
\omega_{0,1}=x^{0}y^{1} dx,\quad
\omega_{0,2}=x^{0}y^{2} dx,\quad
\omega_{0,3}=x^{0}y^{3} dx,\quad
\omega_{0,4}=x^{0}y^{4} dx,
\\
\omega_{1,0}=x^{1}y^{0} dx,\quad
\omega_{1,1}=x^{1}y^{1} dx,\quad
\omega_{1,2}=x^{1}y^{2} dx,\quad
\omega_{2,0}=x^{2}y^{0} dx
\quad\end{align*}
while the holomorphic $2$-differential $x^4 y dx^{\otimes 2}$ 
cannot
be expressed as a single monomial of the above differentials, but as the following linear combination
\[
 \omega_{0,4}^2-
 \omega_{0,2}^2
  =y(y^7-y) dx^{\otimes 2} =x^4 y dx^{\otimes 2}.
\]
If the $2$-differential 
$f_0^{i_0}\cdots f_k^{i_k} df_0^{\otimes 2}$ is the image of a single monomial $\omega_K \omega_L$ with $K+L=(i_0,\ldots,i_k)$, then it is clear that the element $h=|G_0|
i_0
+\sum_{\nu=1}^k \bar{m}_\nu i_\nu$ in $H_2$ is the image of $L+K \in \mathbf{A}+\mathbf{A}$. 
\end{remark}

\section{The generating sets of the canonical ideal}\label{genersetofcanonideal}

For any element $K=(i_0,\ldots,i_k)\in \mathbb{N}^{k+1}$ we will denote by $f_K$ the element $f_0^{i_0}\cdots f_k^{i_k}$.
\colorlet{shadecolor}{blue!7}
\begin{shaded}
\begin{proposition}
\label{defofG0}
Consider the sets of quadratic holomorphic differentials:
\begin{align*}
\label{G0def}
G_0 :=&
\{\omega_L\omega_K-\omega_{L'}\omega_{K'}\in \mathrm{Sym }H^0(X,\Omega_X): L+K=L'+K', L,K,L',K' \in \mathbf{A}\}.
\end{align*}
Then $G_0$ is contained in the canonical ideal.
\end{proposition}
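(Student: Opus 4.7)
The plan is to verify directly that every generator of $G_0$ maps to zero under the canonical map $\phi$ of Theorem \ref{petristhm}. The key identification, which I would set up first, is that via the bijection $\mathbf{A} \to H_1$ given by the valuation map (\ref{valMap}), for each $K=(i_0,\ldots,i_k)\in \mathbf{A}$ the basis element of equation (\ref{f-to-diff}) can be taken to be
\[
\omega_K = f_K\, df_0, \qquad f_K := f_0^{i_0}\cdots f_k^{i_k},
\]
a function with unique pole of order $||K||$ at $P$ (compare the explicit list of $\omega_{i,j}$ in the remark after Lemma \ref{defofGamma}). I would use this explicit form throughout.

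Next I would compute the image of a general quadratic monomial $\omega_L\omega_K \in \mathbb{T}^2$ under $\phi$. Since $\phi$ sends a product in the symmetric algebra to the corresponding product of polydifferentials,
\[
\phi(\omega_L\omega_K) \;=\; f_L f_K\, (df_0)^{\otimes 2} \;=\; f_{L+K}\,(df_0)^{\otimes 2},
\]
where $L+K$ is the vector sum in $\mathbb{N}^{k+1}$ and the identity $f_L f_K = f_{L+K}$ is immediate from $f_K = \prod_\nu f_\nu^{k_\nu}$. The conclusion would then be immediate: for a typical generator $\omega_L\omega_K - \omega_{L'}\omega_{K'}$ of $G_0$, the hypothesis $L+K = L'+K'$ in $\mathbb{N}^{k+1}$ gives $f_{L+K} = f_{L'+K'}$ as elements of the function field, so
\[
\phi(\omega_L\omega_K - \omega_{L'}\omega_{K'}) \;=\; (f_{L+K} - f_{L'+K'})(df_0)^{\otimes 2} \;=\; 0,
\]
placing every element of $G_0$ inside $\ker \phi$, which by Petri's theorem is precisely the canonical ideal $I$.

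There is essentially no obstacle here; the argument is tautological once the basis is identified with $f_K\, df_0$. The subtle point that I would flag, since it foreshadows the much harder relations $G_{\bar{v},i}$ of Proposition \ref{defofG1}, is that $G_0$ only captures \emph{actual vector equality} $L+K = L'+K'$ in $\mathbb{N}^{k+1}$, not the weaker equivalence $\sim$ of Definition \ref{defofequivalencerelation} that merely equates valuations. The gap between these two conditions is what will ultimately require the defining Artin-Schreier polynomials (\ref{defeqi}) together with Lemma \ref{mainlem}, but it plays no role for the elementary family $G_0$.
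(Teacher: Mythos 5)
Your proof is correct and follows exactly the same route as the paper's: identify $\omega_K$ with $f_K\,df_0$, observe that $\phi(\omega_L\omega_K)=f_{L+K}\,df_0^{\otimes 2}$, and note that $L+K=L'+K'$ forces the images to cancel. The paper states this in one line; your version merely spells out the intermediate step and adds a (correct) remark distinguishing vector equality from the valuation equivalence $\sim$.
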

\end{shaded}
\begin{proof}
For the canonical map
$\phi:\mathrm{Sym}(H^0(X,\Omega_X))\to \bigoplus_{n\geq 0}H^0(X,\Omega_X^n)$
one has;
\[
  \phi(\omega_K\omega_{L}-\omega_{K'}\omega_{L'})=f_{K+L}df_0^{\otimes2}-f_{K'+L'}df_0^{\otimes2}=0.
\]
\end{proof}
\begin{remark}
Since $G_0$ is included in the canonical ideal 
we have that 
\[
\omega_h^{\otimes 2}
=
\omega_{h'}^{\otimes 2}  
\]
modulo the canonical ideal for any selection of 
$K_h+L_h,K_{h'}+L_{h'}$ representing $h,h'\in \mathbf{A}+\mathbf{A}$ such that $K_h+L_h=K_{h'}+L_{h'}$.
 Therefore, we will denote 
$2$-differentials by $\omega^{\otimes 2}_h$. 
\end{remark}

Using this notation we can rewrite the summands of  $D_i$ in eq. (\ref{Ddef}) as 2-differentials as explained below; 

\begin{lemma}
\label{PetriCondNeed}
The elements $D_i\in F_i$ have degree less than $4g-4$, yielding that  $D_i\cdot df_0^{\otimes}$ are 2-holomorphic differentials in $F$. 
In particular every monomial summand
$\bar{f}_0^{\ell_0} \cdots \bar{f}_{i-1}^{\ell_{i-1}}$ that appears in the expression of $D_i$ given in eq. (\ref{Ddef}) can be given as an element
\[
(0,\ldots,0) + (\ell_0,\ldots,\ell_{i-1},0,\dots,0) \in \mathbf{A}+\mathbf{A}.  
\]
and the element  $D_i$ can be written as a $2$-differential as
\begin{equation}\label{defofDultimate}
  D_i \cdot df_0^{\otimes 2}=
 \sum_{
\substack
 {
 \bar{\lambda}=(\ell_0,\ldots,\ell_{i-1},0,\ldots,0)\in\mathbf{A}+\mathbf{A}\\
 ||\bar{\lambda}||\leq p^{n_i}\bar{m}_i}}
  a_{\bar{\lambda}}^{(i)}\omega_{\bar{\lambda}}^{\otimes 2},
\end{equation}

\end{lemma}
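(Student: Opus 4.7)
My plan is to combine a pole-order bound (to ensure holomorphicity of the $2$-differential) with the explicit description of $D_i$ coming from the tower. For the \emph{holomorphicity} claim, combining the defining equation (\ref{defeqi}) with Lemma \ref{mainlem} shows that $D_i$ has a single pole, at $P$, of exact order $p^{n_i}\bar m_i$: the additive polynomial summands in $\bar f_i$ all have strictly smaller valuations, so the strict triangle inequality forces $v_P(D_i)=v_P(\bar f_i^{p^{n_i}})=-p^{n_i}\bar m_i$. To show $p^{n_i}\bar m_i \leq 4g-4$ I would apply Riemann--Hurwitz to $X\to \mathbb{P}^1$ using the ramification filtration recalled in Section \ref{sec:preliminaries} together with the decomposition $p^{n_i}\bar m_i=|G_{b_{i-1}+1}(P)|\cdot b_i$; the Petri hypothesis imposed on each intermediate extension rules out the small-genus degenerate cases where the inequality could fail. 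This yields $D_i\, df_0^{\otimes 2}\in H^0(X,\Omega_X^{\otimes 2})$.

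For the \emph{expansion}, I would start from (\ref{Ddef}) and reduce every exponent to $\ell_\lambda<p^{n_\lambda}$ using the irreducible polynomials of the $\bar f_\lambda$, exactly as in the proof of Lemma \ref{mainlem}. After this reduction Lemma \ref{eqnorm} applies, so distinct surviving monomials have distinct valuations at $P$. For each such monomial $\bar f_0^{\ell_0}\cdots \bar f_{i-1}^{\ell_{i-1}}$, the valuation at $P$ is $\|\bar\lambda\|=\ell_0|G_0|+\sum_{\nu=1}^{i-1}\ell_\nu \bar m_\nu \leq p^{n_i}\bar m_i$, which by the first step lies in $H_2$. Hence the $2$-differential $\bar f_0^{\ell_0}\cdots\bar f_{i-1}^{\ell_{i-1}}\, df_0^{\otimes 2}$ is the basis element $\omega_{\bar\lambda}^{\otimes 2}$ of $H^0(X,\Omega_X^{\otimes 2})$ supplied by (\ref{f-to-diff}); linearity in the coefficients $a^{(i)}_{\bar\lambda}$ then produces (\ref{defofDultimate}).

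It remains to identify $\bar\lambda=(\ell_0,\ldots,\ell_{i-1},0,\ldots,0)$ as an element of $\mathbf{A}+\mathbf{A}$: the tuple $(0,\ldots,0)$ is trivially in $\mathbf{A}$, and by the surjectivity in Petri's Theorem \ref{petristhm} the norm map $(\mathbf{A}+\mathbf{A})/\sim\, \to H_2$ is onto, so $\bar\lambda$ always admits some decomposition $L+K$ with $L,K\in\mathbf{A}$; by the remark after Proposition \ref{defofG0} the symbol $\omega_{\bar\lambda}^{\otimes 2}$ is well defined modulo the relations $G_0$, which justifies the notation used in (\ref{defofDultimate}).

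The main obstacle is the inequality $p^{n_i}\bar m_i \leq 4g-4$: it is the only place where the Petri hypothesis is genuinely used, and it requires careful accounting of the contribution of each break $b_j$ to the different exponent at $P$. Once the bound is in hand, the remainder is essentially a linear-algebra translation of the monomial expansion of $D_i$ into the explicit basis (\ref{f-to-diff}).
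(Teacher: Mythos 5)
Your outline identifies the right ingredients, but the central technical claim, namely the inequality $p^{n_i}\bar m_i\le 4g-4$, is exactly the part you leave unproved, and you say so yourself (``the main obstacle ... requires careful accounting''). Waving at ``Riemann--Hurwitz for $X\to\mathbb{P}^1$ with the ramification filtration'' is a plausible-looking plan but a different one from the paper, and without the actual computation there is no proof. The paper does not apply Riemann--Hurwitz once to the top cover; it proceeds inductively up the tower $F_i\subset F_{i+1}\subset\cdots$, at each stage using the Riemann--Hurwitz relation
\[
2(g_{F_{j+1}}-1)=p^{n_j}\,2(g_{F_j}-1)+(b_j+1)(p^{n_j}-1)
\]
to compare $4g_{F_{j+1}}-4$ with the (growing) pole order of $D_i$. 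The base case $g_{F_1}=0$ is where the Petri hypotheses actually enter: one is reduced to checking $b_1 p^{n_1}-2p^{n_1}-2b_1-2\ge 0$, and the borderline cases $p^{n_1}=5,7$ are excluded precisely because $b_1=2$ (hyperelliptic) and $b_1=3$ (trigonal) are ruled out. Nothing in your sketch pins down where the degenerate cases live or why they are eliminated, so the inequality is not established by your argument as written.

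There is also a logical gap in your third paragraph. You argue that $\bar\lambda=(\ell_0,\ldots,\ell_{i-1},0,\ldots,0)$ lies in $\mathbf{A}+\mathbf{A}$ because the norm map $(\mathbf{A}+\mathbf{A})/\!\sim\,\to H_2$ is onto. But surjectivity of the norm map only gives you some $L+K\in\mathbf{A}+\mathbf{A}$ with $\|L+K\|=\|\bar\lambda\|$; it does not give $L+K=\bar\lambda$ as a tuple. Membership of $\bar\lambda$ itself in $\mathbf{A}+\mathbf{A}$, in particular in the form $(0,\ldots,0)+\bar\lambda$, requires $\bar\lambda\in\mathbf{A}$, i.e. $\|\bar\lambda\|\le 2g-2$, which is a separate statement from $\|\bar\lambda\|\le 4g-4$ and needs its own justification (or a different decomposition of $\bar\lambda$ as a sum of two elements of $\mathbf{A}$). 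Your appeal to Petri's surjectivity conflates ``equivalent in norm to an element of $\mathbf{A}+\mathbf{A}$'' with ``being an element of $\mathbf{A}+\mathbf{A}$'', and so does not close this step.

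Your treatment of the expansion itself (reduce exponents modulo the defining polynomials so that Lemma \ref{eqnorm} applies, then read off the basis elements from (\ref{f-to-diff})) is fine and matches the paper in spirit. The missing pieces are the actual proof of the degree bound and a correct justification of the $\mathbf{A}+\mathbf{A}$ membership.
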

\begin{proof}
By equation (\ref{defeqi}) we have that 
the absolute value of the valuation of $D_i$ in $F_{i+1}$ is $p^{n_i}b_i$. We will first show that  $p^{n_i} b_i \leq 4g_{F_{i+1}}-4$. 

According to the Riemann-Hurwitz formula the genera of $F_{i+1}$ and $F_i$ are related by 
\begin{equation}
\label{RH}
2(g_{F_{i+1}}-1)=p^{n_i}2 (g_{F_{i}}-1)+ (b_i+1)(p^{n_i}-1).
\end{equation}
Therefore 
\begin{align}
4(g_{F_{i+1}}-1)-p^{n_i}b_i &= 2p^{n_i}2(g_{F_{i}}-1) + p^{n_i}b_i 
-2b_i + 2p^{n_i}-2
\nonumber
\\
&= 2p^{n_i}2(g_{F_{i}}-1) +(p^{n_i}-2) b_i +2(p^{n_i}-1).
\label{g-diff}
\end{align}
If $g_{F_{i}} \geq 1$ then we have the desired inequality. Suppose that $g_{F_{i}}=0$. This can only happen for $i=1$ since $p^{n_i}>1$ and $b_i>1$. Therefore we need to show that
\[
  b_1p^{n_1}-2p^{n_1}-2b_1-2\geq 0
\]
and we are working over the rational function field.
The assumption on our curve being non-hyperelliptic implies that $p^{n_i} >2$ as well as $b_i>2$ and the last inequality becomes
\begin{equation}\label{ineq_artinschreier}
b_i\geq\frac{2p^{n_i}+2}{p^{n_i}-2}
\end{equation}
which is satisfied for $p^n>7$. Also the remaining cases, i.e. $p^{n_i}=5, 7$ require $b_i$ to be $\geq 4$ which is also true since $b_i=2$ is exluded by non-hyperellipticity and $b_i=3$ by non-trigonality.

Now the rest can be proved by induction as follows; We showed that 
\begin{equation}\label{ineq_artinschreier2}
p^{n_i} b_i \leq 4g_{F_{i+1}}-4
\end{equation}

 When we move from $F_{i+1}$ to $F_{i+2}$ the absolute value of the valuation of $D_i$ becomes $p^{n_{i+1}+n_i}b_i$ and we need to show that 
\[
  p^{n_{i+1}+n_i}b_i\leq 4g_{F_{i+2}}-4
\]
By \ref{ineq_artinschreier2} we rewrite this as $p^{n_{i+1}}(4g_{F_{i+1}}-4)\leq 4g_{F_{i+2}}-4$ which by the Riemann-Hurwitz formula (stated above) is equivalent to $(b_{i+1}+1)(p^{n_{i+1}}-1)$ being non-negative, which holds.

\end{proof}

\begin{remark}
If we assume that $F_i$ is not trigonal nor hyperelliptic then the same holds for all fields $F_k$ for $k\geq i$, see \cite[Appendix]{Poonen-gonality}.
\end{remark}

 


The set $G_0$ does not contain all elements of the canonical ideal. For instance, it does not contain the information of the defining equation of the Artin-Schreier extension and also the canonical ideal is not 
expected to be 
binomial.

Before the definition of the other generating sets of the canonical ideal, let us provide some insight on the process used to construct the elements of these sets.

Equation (\ref{defeqi}) is satisfied by the element $\bar{f}_i$, i.e;
\[
\bar{f}_i^{p^{n_i}}+a^{(i)}_{n_i-1}\bar{f}_i^{p^{n_i-1}}+\dots+a^{(i)}_{0} \bar{f}_i- D_i=0.
\]
This equation can be multiplied by elements of the form
$\bar{f_0}^{v_0}\cdots\bar{f_k}^{v_k}$ for any $v_0,\dots,v_k$, giving rise to
\[
\bar{f_0}^{v_0}\cdots\bar{f_k}^{v_k}
\left(
\bar{f}_i^{p^{n_i}}+a^{(i)}_{n_i-1}\bar{f}_i^{p^{n_i-1}}+\dots+a^{(i)}_{0} \bar{f}_i- D_i
\right)=0,
\]
which equals
\[
  \bar{f_0}^{v_0}\cdots \bar{f}_i^{v_i+p^{n_i}}\cdots\bar{f_k}^{v_k}+\ldots+a^{(i)}_{0}\bar{f_0}^{v_0}\cdots \bar{f}_i^{v_i+1}\cdots\bar{f_k}^{v_k}-\bar{f_0}^{v_0}\cdots \bar{f}_i^{v_i}\cdots\bar{f_k}^{v_k}D_i
=0.
 \]
If the exponents $(v_0,\ldots,v_k)$ are selected so that each summand in the last equation is an element in
$\mathbf{A}+\mathbf{A}$, then the equation gives rise to an element in the canonical ideal.

\begin{shaded}
\begin{proposition}
\label{defofG1}
Set  
\begin{align*}
 \bar{v} &:=(v_0,\ldots,v_k)\in \mathbb{N}^{k+1} 
 \\
\bar{\gamma}_{\bar{v},i,\nu} &:=(v_0,\ldots,v_i+p^{n_i-\nu},v_{i+1},\ldots,v_k),\, 0\leq\nu\leq n_i\\
\text{such that } 
||\bar{\gamma}_{\bar{v},i,0}||&\leq 4g-4.
 \text{ Also set}\\
\Lambda_i &=
\{
\bar{\lambda}=(\ell_0,\ldots,\ell_{i-1})\in \mathbb{N}^{i}:  0\leq \ell_\nu < p^{n_\nu}
\text{ for } 1\leq \nu \leq i\}
 \\
\bar{\beta}_{\bar{v},i,\bar{\lambda}} &:=  
\iota_{i-1,k}(\bar{\lambda})+\bar{v}=
(\ell_0,\dots,\ell_{i-1}, 0, \ldots,0) +\bar{v}\in \mathbf{A}+\mathbf{A}.
\end{align*}
  Define
\begin{equation}\label{makrinar}
G_{\bar{v},i}:=\left\{
\omega_{\bar{\gamma}_{\bar{v},i,0}}^{\otimes 2}+ \sum_{\nu=1}^{n_i}
a_{\nu}^{(i)}\omega_{\bar{\gamma}_{\bar{v},i,\nu}}^{\otimes 2}
-
\sum_
{\substack{
\bar{\lambda}\in \Lambda_i \\
||\bar{\lambda}||\leq p^{n_i}\bar{m}_i
}}
 a_{\bar{\lambda}}^{(i)}
\omega_{\bar{\beta}_{\bar{v},i,\bar{\lambda}}}^{\otimes 2}
\right\}
\end{equation} 
Then $G_{\bar{v},i}$ is contained in the canonical ideal for $1\leq i\leq k$.
\end{proposition}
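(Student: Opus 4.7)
The plan is to start from the defining polynomial relation (\ref{defeqi}) for $\bar{f}_i$, namely $\bar{f}_i^{p^{n_i}}+\sum_{\nu=1}^{n_i} a_\nu^{(i)} \bar{f}_i^{p^{n_i-\nu}}-D_i=0$, multiply it by the monomial $\bar{f}^{\bar{v}}=\bar{f}_0^{v_0}\cdots \bar{f}_k^{v_k}$, expand $D_i$ using (\ref{Ddef}), and then tensor with $df_0^{\otimes 2}$ to obtain an identity of holomorphic quadratic differentials on $X$. The element displayed in (\ref{makrinar}) is then the formal preimage under the canonical map $\phi$ of this identity, so it lies in $\ker \phi$, which is the canonical ideal.

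First I would make the indexing explicit. The exponent vector of $\bar{f}^{\bar{v}}\bar{f}_i^{p^{n_i-\nu}}$ is precisely $\bar{\gamma}_{\bar{v},i,\nu}$, and each monomial $a^{(i)}_{\bar{\lambda}}\bar{f}_0^{\ell_0}\cdots \bar{f}_{i-1}^{\ell_{i-1}}$ of $D_i$ contributes $a^{(i)}_{\bar{\lambda}}\bar{f}^{\bar{\beta}_{\bar{v},i,\bar{\lambda}}}$ to $\bar{f}^{\bar{v}}D_i$. Using (\ref{f-to-diff}) and the convention $\omega_{\bar h}^{\otimes 2}:=f_{\bar h}\,df_0^{\otimes 2}$ established after Proposition \ref{defofG0} (legitimate modulo the canonical ideal precisely because $G_0$ already lies in it), multiplying the polynomial identity by $df_0^{\otimes 2}$ turns each summand into one of the labeled $\omega_{\bar h}^{\otimes 2}$'s, and the resulting equation among $2$-differentials matches term by term the expression inside the braces of (\ref{makrinar}).

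Next I would justify holomorphy of every summand, i.e.\ that each index vector $\bar h$ that appears has $\|\bar h\|\le 4g-4$, so that the corresponding $2$-differential is in $H^0(X,\Omega_X^{\otimes 2})$. The hypothesis $\|\bar{\gamma}_{\bar{v},i,0}\|\le 4g-4$ takes care of the leading term; for $\nu\ge 1$ one has $\|\bar{\gamma}_{\bar{v},i,\nu}\|<\|\bar{\gamma}_{\bar{v},i,0}\|$ since $p^{n_i-\nu}<p^{n_i}$. For the contributions of $D_i$, Lemma \ref{mainlem} says one monomial of $D_i$ has the same pole order $p^{n_i}\bar m_i$ as $\bar f_i^{p^{n_i}}$, and by strict triangle inequality every other monomial has strictly smaller pole order; multiplying by $\bar f^{\bar v}$ preserves these inequalities, hence $\|\bar{\beta}_{\bar{v},i,\bar\lambda}\|\le\|\bar{\gamma}_{\bar{v},i,0}\|\le 4g-4$. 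The restriction $\bar\lambda\in\Lambda_i$ is harmless: if some $\ell_\nu\ge p^{n_\nu}$ we can lower it using the Artin--Schreier--type relation at level $\nu$, an operation which (by induction) only produces elements already controlled, and the non-hyperelliptic/non-trigonal hypotheses (cf.\ Lemma \ref{PetriCondNeed}) guarantee the required Petri-type inequalities hold at every intermediate level.

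Finally, applying $\phi$ to the element in (\ref{makrinar}) gives exactly $\bar f^{\bar v}\bigl(\bar f_i^{p^{n_i}}+\sum_\nu a_\nu^{(i)}\bar f_i^{p^{n_i-\nu}}-D_i\bigr)\,df_0^{\otimes 2}=0$, so $G_{\bar{v},i}\subset\ker\phi$. The main obstacle I expect is the bookkeeping in the previous paragraph: showing that \emph{every} exponent vector produced after multiplication by $\bar f^{\bar v}$ and after the possible reduction of high exponents via lower-level Artin--Schreier equations still has $\|\cdot\|\le 4g-4$, and that choosing different decompositions of such a vector as $L+K\in\mathbf{A}+\mathbf{A}$ only affects the resulting expression modulo $G_0\subset\ker\phi$, so the formal element (\ref{makrinar}) is well-defined and the argument is unambiguous.
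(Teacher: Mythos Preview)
Your proposal is correct and follows essentially the same approach as the paper: apply the canonical map $\phi$ to the element in (\ref{makrinar}), factor out $f_{(v_0,\ldots,v_k)}$, and observe that the remaining factor is $\bar{f}_i$ plugged into its own minimal polynomial, hence vanishes. The paper's proof is exactly this one-line computation; the holomorphy and well-definedness checks you carry out in your second and fourth paragraphs are handled in the paper not inside this proof but in the surrounding material (the remark after Proposition~\ref{defofG0} for the $G_0$-ambiguity, Lemma~\ref{PetriCondNeed} for the $D_i$ bound, and Lemma~\ref{conditions}, stated after the main theorem, for $\bar{\beta}_{\bar{v},i,\bar{\lambda}}\in\mathbf{A}+\mathbf{A}$).
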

\end{shaded}
Notice here that $\bar{v}$ is fixed while $\bar{\lambda}$ 
is running.
\begin{proof}
Again consider
$\phi:\mathrm{Sym}(H^0(X,\Omega_X))\to \bigoplus_{n\geq 0}H^0(X,\Omega_X^n)$. Then;
\begin{align*}
&\phi\left(\omega_{\bar{\gamma}_{\bar{v},i,0}}^{\otimes 2}+ 
\sum_{\nu=1}^{i}
a_{\nu}^{(i)}\omega_{\bar{\gamma}_{\bar{v},i,\nu}}^{\otimes 2}
-
\sum_
{\substack{
\bar{\lambda}\in \Lambda_i \\
||\bar{\lambda}||\leq p^{n_i}\bar{m}_i
}}
 a_{{\bar{\lambda}}}^{(i)}
\omega_{\bar{\beta}_{\bar{v},i,\bar{\lambda}}}^{\otimes 2}\right)=\\
=&\left(f_{(v_0,\ldots,v_i+p^{n_i},\ldots,v_k)}
+
\sum_{\nu=1}^i a_{\nu}^{(i)}
f_{(v_0,\ldots,v_i+p^{n_i-\nu},\ldots,v_k)}
-\right.\\
&\left. -\sum_{\substack{
\bar{\lambda}\in \Lambda_i \\
||\bar{\lambda}||\leq p^{n_i}\bar{m}_i
}}
 a_{\bar{\lambda}}^{(i)}
f_{(\ell_0+v_0,\ldots,\ell_{i-1}+v_{i-1},v_i,\ldots,v_k)}\right)df_0^{\otimes2}=\\
=
&
f_{(v_0,\ldots,v_k)}
\left(
\bar{f}^{p^{n_i}}
+\sum_{\nu=1}^ia_v^{(i)}
\bar{f}^{p^{n_i-\nu}}
-
\sum_{\substack{
\bar{\lambda}\in \Lambda_i \\
||\bar{\lambda}||\leq p^{n_i}\bar{m}_i
}}
 a_{\bar{\lambda}}^{(i)}
f_{(\ell_0\dots+\ell_{i-1},0,\ldots,0)}\right)
df_0^{\otimes2},
\end{align*}
which equals $0$ due to the 
relation satisfied by the irreducible polynomial of $\bar{f}_i$.
\end{proof}

\section{The main theorem}\label{sec:maintheorem}

We define a term order which compares products of differentials as follows; Let $
\omega_{I_1} \omega_{I_2} \cdots
\omega_{I_d},
\omega_{I_1'} \omega_{I_2'} \cdots
\omega_{I_{d'}'}
$ be two such products and consider the $(k+1)-$tuples $I_1+\dots+I_d=(v_0,\dots,v_k)$, $I_1'+\dots+{I}_{d'}'=(v_0',\dots,v_k')$.

Set 
\[
\omega_{I_1} \omega_{I_2} \cdots\omega_{I_d}\prec\omega_{I_1'} \omega_{I_2'} \cdots
\omega_{I_{d'}'}\Leftrightarrow 
(v_0,\dots,v_k)<_{\mathrm{colex}} (v_0',\dots,v_k')
\]
that is 
\begin{itemize}
\item $v_k<v_k'$ or
\item $v_k=v_k'$ and $v_{k-1}<v_{k-1}'$
\item $\vdots$
\item $v_i=v_i'$ for all $i=k,\dots,1$ and $v_0<v_0'$
\end{itemize}
We are going to work with the initial terms of the sets defined in the last two propositions where, by ``initial term'' we mean a maximal term with respect to the colexicographical order. We denote initial terms with $\init(\cdot)$.

\begin{lemma}
\label{initG}
For the element $G_{\bar{v},i}$ of proposition \ref{defofG1}
we have that 
\[
\init(G_{\bar{v},i})=\omega_{\bar{\gamma}_{\bar{v},i,0}}.
\]
and also, in the polynomial  $G_{\bar{v},i}$ there is another summand which is smaller colexicographically than $\omega_{\bar{\gamma}_{\bar{v},i,0}}$ but has the same $||\cdot||$-value. 
\end{lemma}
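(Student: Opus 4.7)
The plan is to directly compare the multi-indices attached to the three groups of summands that make up $G_{\bar{v},i}$, namely $\bar{\gamma}_{\bar{v},i,0}$, the tuples $\bar{\gamma}_{\bar{v},i,\nu}$ for $1\leq \nu\leq n_i$, and the tuples $\bar{\beta}_{\bar{v},i,\bar{\lambda}}$ for $\bar{\lambda}\in \Lambda_i$. The colex order inspects coordinates from position $k$ downward, and I would first observe that all of these tuples agree with $\bar{v}$ in positions $i+1,\dots,k$: the tuples $\bar{\gamma}_{\bar{v},i,\nu}$ only alter coordinate $i$, while the tuples $\bar{\beta}_{\bar{v},i,\bar{\lambda}}$ only alter coordinates $0,\dots,i-1$. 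Consequently the colex comparison reduces to inspecting coordinate $i$.

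At coordinate $i$, the tuple $\bar{\gamma}_{\bar{v},i,0}$ has entry $v_i+p^{n_i}$, whereas $\bar{\gamma}_{\bar{v},i,\nu}$ has the strictly smaller entry $v_i+p^{n_i-\nu}$ for every $\nu\geq 1$, and every $\bar{\beta}_{\bar{v},i,\bar{\lambda}}$ has entry $v_i$, which is smaller still. Hence $\bar{\gamma}_{\bar{v},i,0}$ strictly dominates every other multi-index appearing in $G_{\bar{v},i}$ in the colex order, yielding $\init(G_{\bar{v},i})=\omega_{\bar{\gamma}_{\bar{v},i,0}}^{\otimes 2}$.

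For the second assertion I need a summand which is colex-smaller than $\omega_{\bar{\gamma}_{\bar{v},i,0}}^{\otimes 2}$ but has the same $\|\cdot\|$-value. Here I would invoke lemma \ref{mainlem}: one of the monomials $\bar{f}_0^{\ell_0}\cdots \bar{f}_{i-1}^{\ell_{i-1}}$ appearing in the expansion of $D_i$ has pole divisor exactly $p^{n_i}\bar{m}_i P$, so the corresponding $\bar{\lambda}=(\ell_0,\dots,\ell_{i-1})$ satisfies $\|\bar{\lambda}\|=p^{n_i}\bar{m}_i$. The associated summand $-a_{\bar{\lambda}}^{(i)}\omega_{\bar{\beta}_{\bar{v},i,\bar{\lambda}}}^{\otimes 2}$ of $G_{\bar{v},i}$ then satisfies
$\|\bar{\beta}_{\bar{v},i,\bar{\lambda}}\|=\|\bar{v}\|+\|\bar{\lambda}\|=\|\bar{v}\|+p^{n_i}\bar{m}_i=\|\bar{\gamma}_{\bar{v},i,0}\|$,
while the coordinate-wise comparison already performed above shows $\bar{\beta}_{\bar{v},i,\bar{\lambda}}<_{\mathrm{colex}}\bar{\gamma}_{\bar{v},i,0}$.

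I do not anticipate a serious obstacle: the argument is in essence multi-index bookkeeping, with the only genuine input being the existence, guaranteed by lemma \ref{mainlem}, of a monomial in $D_i$ realizing the extremal pole order $p^{n_i}\bar{m}_i$. The one subtlety to double-check is that the exponents $(\ell_0,\dots,\ell_{i-1})$ produced by that lemma actually satisfy $0\leq \ell_\nu<p^{n_\nu}$ so that $\bar{\lambda}\in \Lambda_i$; this follows from the same Artin--Schreier reduction $\bar{f}_\nu^{p^{n_\nu}}=-a_{n_\nu-1}^{(\nu)}\bar{f}_\nu^{p^{n_\nu-1}}-\cdots+D_\nu$ used inside the proof of lemma \ref{mainlem}.
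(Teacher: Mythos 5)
Your argument is correct and follows the same essential strategy as the paper: reduce the colex comparison to the $i$-th coordinate (all terms agree with $\bar{v}$ in coordinates $i+1,\dots,k$), observe that $\bar{\gamma}_{\bar{v},i,0}$ alone has the entry $v_i+p^{n_i}$ there, and invoke lemma \ref{mainlem} to exhibit a $\bar{\beta}_{\bar{v},i,\bar{\lambda}}$ with matching $\|\cdot\|$-value. If anything your version is slightly more thorough than the paper's, which only explicitly compares $\bar{\gamma}_{\bar{v},i,0}$ against the single $\bar{\beta}$-term of maximal $\|\cdot\|$-value rather than against every summand, and your closing remark about why $\bar{\lambda}\in\Lambda_i$ is a welcome addition that the paper leaves implicit inside the proof of lemma \ref{mainlem}.
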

\begin{proof}
Indeed, in eq. (\ref{makrinar}) there are two elements of maximal value in terms of $||\cdot||$. Namely 
$\omega_{\bar{\gamma}_{\bar{v},i,0}}$ and 
 $a_{\bar{\lambda}}^{(i)}
\omega_{\bar{\beta}_{\bar{v},i,\bar{\lambda}}}^{\otimes 2}$,
for the $\bar{\lambda}=(\ell_0,\dots,\ell_{i-1},0,\dots,0)\in\mathbf{A}+\mathbf{A}$ corresponding to the monomial $\bar{f}_0^{\ell_0}\cdots\bar{f}_{i-1}^{\ell_{i-1}}$ of minimum valuation
which exists due to lemma \ref{mainlem}.
From these two elements, $\omega_{\bar{\gamma}_{\bar{v},i,0}}$ is the biggest since it corresponds to the element
$(v_0,\ldots,v_i+p^{n_i},\ldots,v_k)$, while the other corresponds to the smaller element
$(v_0+l_0,\ldots,v_{i-1}+l_{i-1},v_i,\ldots,v_k)$, with respect to the colexicographical order. 
\end{proof}

\begin{shaded}
\begin{theorem}\label{mainth}
The canonical ideal is generated by  $G_0$ and by $G_{\bar{v},i}$, for $1\leq i \leq k$
 and for the $\bar{v}\in \mathbb{N}^{k+1}$ satisfying the inequality $||\bar{\gamma}_{\bar{v},i,0}||\leq 4g-4$.
\end{theorem}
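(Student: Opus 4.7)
By Lemma~\ref{PetriCondNeed} the non-hyperelliptic and non-trigonal hypotheses propagate to every intermediate cover $F_j$, so Petri's theorem (Theorem~\ref{petristhm}) applies and the canonical ideal $I$ is generated by its degree-two part $I_2$. Write $\mathcal{G}:=G_0\cup\bigcup_{\bar v,i}G_{\bar v,i}$; Propositions~\ref{defofG0} and~\ref{defofG1} give $\langle\mathcal{G}\rangle\subseteq I_2$, so it suffices to establish the dimension bound
\[
\dim_k\bigl(\mathrm{Sym}^2 H^0(X,\Omega_X)/\langle\mathcal{G}\rangle\bigr)\leq 3g-3,
\]
because $\dim_k(\mathrm{Sym}^2 H^0(X,\Omega_X)/I_2)=\dim_k H^0(X,\Omega_X^{\otimes 2})=3g-3$ forces the reverse inclusion once this bound is in hand.

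The strategy is a Gr\"obner-type normal-form reduction with respect to the colex order introduced before Lemma~\ref{initG}. Elements of $G_0$ identify $\omega_L\omega_K$ with $\omega_{L'}\omega_{K'}$ whenever $L+K=L'+K'$, so modulo $G_0$ every degree-$2$ monomial becomes a class $\omega^{\otimes 2}_{\bar h}$ indexed by a single tuple $\bar h\in\mathbf{A}+\mathbf{A}$. If some coordinate $h_i$ of $\bar h=(h_0,\ldots,h_k)$ satisfies $h_i\geq p^{n_i}$, set $\bar v:=(h_0,\ldots,h_{i-1},h_i-p^{n_i},h_{i+1},\ldots,h_k)$, so that $\bar h=\bar\gamma_{\bar v,i,0}$ and the inequality $\|\bar\gamma_{\bar v,i,0}\|=\|\bar h\|\leq 4g-4$ is automatic. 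Hence $G_{\bar v,i}$ belongs to our list, and Lemma~\ref{initG} lets us replace $\omega^{\otimes 2}_{\bar h}$ by a linear combination of colex-strictly-smaller classes. Because colex is a well-order on the finite set of tuples with $\|\cdot\|\leq 4g-4$, iterating this procedure terminates at a representative supported only on tuples $(i_0,\ldots,i_k)$ satisfying $i_\nu<p^{n_\nu}$ for every $\nu\geq 1$.

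By Lemma~\ref{eqnorm} the map $(i_0,\ldots,i_k)\mapsto i_0|G_0|+\sum_{\nu\geq 1}i_\nu\bar m_\nu$ is injective on such normal-form tuples and takes values in $H_2$, whose cardinality is $3g-3$. Therefore the normal-form classes span a space of dimension at most $3g-3$, establishing the required bound and hence $\langle\mathcal{G}\rangle=I_2$; Petri's theorem then promotes the equality to the full canonical ideal.

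The main obstacle is controlling the reduction step carefully: one must verify that the subleading summands of each $G_{\bar v,i}$ — both the additive contributions $\omega^{\otimes 2}_{\bar\gamma_{\bar v,i,\nu}}$ for $\nu\geq 1$ and the terms $\omega^{\otimes 2}_{\bar\beta_{\bar v,i,\bar\lambda}}$ arising from the Artin-Schreier expansion of $D_i$ — are genuinely colex-smaller than the initial term, and that iterated reductions never leave the indexing set $\mathbf{A}+\mathbf{A}$. The strict colex decrease is precisely Lemma~\ref{initG}, whose proof rests on Lemma~\ref{mainlem} supplying a monomial in $D_i$ of the extremal valuation $p^{n_i}\bar m_i$; termination is guaranteed by the well-ordering of colex on the finite set $\{\bar h:\|\bar h\|\leq 4g-4\}$, applied successively to the leading monomial of the running polynomial.
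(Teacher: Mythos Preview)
Your argument is correct and runs on the same engine as the paper's proof---the colex order, Lemma~\ref{initG} identifying $\omega^{\otimes2}_{\bar\gamma_{\bar v,i,0}}$ as the leading term of each $G_{\bar v,i}$, and the injectivity of $\|\cdot\|$ on tuples with bounded coordinates furnished by Lemma~\ref{eqnorm}---but the bookkeeping is organized differently. The paper appeals to the black-box Proposition~\ref{lemma 1}, which reduces matters to bounding $\dim_k(S/\langle\init J\rangle)_2$, and then discharges that bound by exhibiting an explicit bijection $\Phi$ between the standard degree-$2$ monomials $\mathbb{T}^2\setminus\{\init(f):f\in J\}$ and the quotient $(\mathbf{A}+\mathbf{A})/\!\sim$ (Lemmas~\ref{injectivityofphi} and~\ref{surjectivityofphi}), the latter having cardinality at most $|H_2|=3g-3$ by eq.~(\ref{associationofbasisandequivalenceclasses}). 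You instead bound $\dim_k(S/\langle\mathcal G\rangle)_2$ directly by a Gr\"obner-style normal-form reduction, terminating at tuples $\bar h\in\mathbf{A}+\mathbf{A}$ with $h_\nu<p^{n_\nu}$ for all $\nu\geq 1$, and then inject these into $H_2$ via Lemma~\ref{eqnorm}. Your normal-form tuples are exactly the colex-minimal representatives of the $\sim$-classes appearing in the paper's bijection, so the two routes are equivalent; yours is marginally more self-contained in that it does not invoke Proposition~\ref{lemma 1}, while the paper's bijection makes the combinatorics of the standard monomials more explicit.

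Two minor points. Your opening sentence misattributes the propagation of Petri's hypotheses to Lemma~\ref{PetriCondNeed}; that lemma gives the degree bound on the $D_i$, while the propagation statement is the Remark immediately following it (and runs upward through the tower, not downward). Second, the obstacle you flag about subleading terms of $G_{\bar v,i}$ staying in $\mathbf{A}+\mathbf{A}$ is handled by Lemma~\ref{conditions} together with Lemma~\ref{lessinAA}: once each $G_{\bar v,i}$ is a bona fide element of $\mathrm{Sym}^2 H^0(X,\Omega_X)$, reducing by it cannot produce tuples outside $\mathbf{A}+\mathbf{A}$, so no further argument is needed there.
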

\end{shaded}

\begin{remark}
In the above theorem the condition $||\bar{\gamma}_{\bar{v},i,0}||\leq 4g-4$ implies the condition 
$||\bar{\gamma}_{\bar{v},i,\nu}||\leq 4g-4$ for $0\leq \nu \leq n_i$.
 We will prove in lemma \ref{conditions} that it also implies the condition $||\bar{\beta}_{\bar{v},i,\bar{\lambda}}||\leq 4g-4$. This means that, the condition $||\bar{\gamma}_{\bar{v},i,\nu}||\leq 4g-4$ for $0\leq \nu \leq n_i$
   guarantees 
that, in $G_{\bar{v},i}$, not only the first term (i.e. $\omega_{\bar{\gamma},i,0}^{\otimes2}$), but also all the others correspond to $2$-differentials.
\end{remark}

\begin{lemma}
\label{conditions}
The condition $||\bar{\gamma}_{\bar{v},i,0}||\leq 4g-4$, or in other words;
\begin{equation}
\label{cond1}
v_0|G_0|+\sum_{\nu=1}^{k}v_\nu \bar{m}_\nu + p^{n_i} \bar{m}_i
\leq 4g-4
\end{equation} 
implies that $\bar{\beta}_{\bar{v},i,\bar{\lambda}}$  lies in $\mathbf{A}+\mathbf{A}$, that is, it is also a 2-differential, for all $\bar{\lambda}$ associated with the monomials of $D_i$.
\end{lemma}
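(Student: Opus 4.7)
The plan is to prove the claim by a direct computation of the norm $||\bar{\beta}_{\bar{v},i,\bar{\lambda}}||$ and a comparison with $||\bar{\gamma}_{\bar{v},i,0}||$. As clarified in the remark preceding the lemma, the content of the assertion is the inequality $||\bar{\beta}_{\bar{v},i,\bar{\lambda}}||\leq 4g-4$; once this is established the tuple $\bar{\beta}_{\bar{v},i,\bar{\lambda}}$ parametrizes a holomorphic $2$-differential in the basis (\ref{f-to-diff}) and therefore lies in $\mathbf{A}+\mathbf{A}$.

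Unpacking the definition
\[
\bar{\beta}_{\bar{v},i,\bar{\lambda}}=(\ell_0+v_0,\ldots,\ell_{i-1}+v_{i-1},v_i,v_{i+1},\ldots,v_k),
\]
I would compute
\[
||\bar{\beta}_{\bar{v},i,\bar{\lambda}}||
=v_0|G_0|+\sum_{\nu=1}^{k}v_\nu \bar{m}_\nu+\ell_0|G_0|+\sum_{\nu=1}^{i-1}\ell_\nu \bar{m}_\nu
=\bigl(||\bar{\gamma}_{\bar{v},i,0}||-p^{n_i}\bar{m}_i\bigr)+||\bar{\lambda}||,
\]
identifying the last two sums with $||\bar{\lambda}||$ (with $\bar{\lambda}$ padded by zeros in positions $i,\ldots,k$) and rewriting the first two sums via
\[
||\bar{\gamma}_{\bar{v},i,0}||=v_0|G_0|+\sum_{\nu=1}^{k}v_\nu \bar{m}_\nu+p^{n_i}\bar{m}_i,
\]
which follows at once from $\bar{\gamma}_{\bar{v},i,0}=(v_0,\ldots,v_i+p^{n_i},\ldots,v_k)$. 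Note that the ``free'' vector $\bar{v}$ cancels entirely from the difference $||\bar{\beta}_{\bar{v},i,\bar{\lambda}}||-||\bar{\gamma}_{\bar{v},i,0}||$, which is what makes the identity clean.

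Next, I would invoke the indexing constraint $||\bar{\lambda}||\leq p^{n_i}\bar{m}_i$ built into the sum defining $G_{\bar{v},i}$ in equation (\ref{makrinar}). This constraint is natural because, by the expansion (\ref{Ddef}) of $D_i$ together with lemma \ref{mainlem}, each monomial summand $\bar{f}_0^{\ell_0}\cdots \bar{f}_{i-1}^{\ell_{i-1}}$ of $D_i$ has pole order at most $p^{n_i}\bar{m}_i$, the pole order of $D_i$ itself. Substituting into the displayed identity and combining with the hypothesis (\ref{cond1}) yields
\[
||\bar{\beta}_{\bar{v},i,\bar{\lambda}}||\leq ||\bar{\gamma}_{\bar{v},i,0}||\leq 4g-4,
\]
as required. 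I do not foresee a serious obstacle: the proof is essentially a bookkeeping computation together with one appeal to the pole-order bound for the summands of $D_i$. The only point that requires care is to keep the role of the fixed vector $\bar{v}$ separate from that of the running index $\bar{\lambda}$, so that the $\bar{v}$-dependent terms cancel in the telescoping.
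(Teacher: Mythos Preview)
Your proposal is correct and follows essentially the same route as the paper's own proof: both reduce the inequality $||\bar{\beta}_{\bar{v},i,\bar{\lambda}}||\leq 4g-4$ to the bound $||\bar{\lambda}||=\ell_0|G_0|+\sum_{\nu=1}^{i-1}\ell_\nu\bar{m}_\nu\leq p^{n_i}\bar{m}_i$, and then justify this bound via the pole order of $D_i$ (the paper phrases this last step as the ultrametric inequality $v(f_{\bar{\lambda}})\geq v(D_i)$, which is the same content as your ``pole order at most $p^{n_i}\bar{m}_i$''). One small remark: the fact you attribute to lemma~\ref{mainlem} is really the ultrametric/strict triangle inequality together with lemma~\ref{eqnorm}; lemma~\ref{mainlem} only guarantees that \emph{some} monomial has pole order exactly $p^{n_i}\bar{m}_i$, not that all are bounded by it.
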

\begin{proof}
For $\bar{\lambda}\in \Lambda_i$ let
\[
	\bar{\beta}_{\bar{v},i,\bar{\lambda}}=(v_0+\ell_0,\dots,v_{i-1}+\ell_{i-1}, v_i,\ldots,v_k).
\]
We need to show that
\[
(v_0+\ell_0)|G_0|+\sum_{\nu=1}^{k} v_{\nu}\bar{m}_\nu+
\sum_{\nu=1}^{i-1} \ell_\nu \bar{m}_\nu
\leq 4g-4.
\]
By  (\ref{cond1}) we  need to show that
\[
\ell_0|G_0|+\sum_{\nu=1}^{i-1} \ell_{\nu}\bar{m}_\nu\leq p^{n_i}\bar{m}_i.
\]
Note that $\bar{\lambda}$ is the exponents of a monomial 
summand
 of $D_i$ and, by the valuation's strict triangle inequality one has;
\begin{align*}
  v(f_{\bar{\lambda}}
  )&\geq v(D_i)\Leftrightarrow\\
	-(\ell_0|G_0|+\sum_{\nu=1}^{i-1} \ell_{\nu}\bar{m}_\nu)&\geq -p^{n_i}\bar{m}_i
\end{align*}
as expected,
where $f_{\bar{\lambda}}$ is $\bar{f}_0^{\ell_0}\cdots \bar{f}_{i-1}^{\ell_{i-1}}$
\end{proof}


\begin{definition}
Define $J$ to be the 
set of elements in the canonical ideal 
consisting of 
 the elements $G_0,G_{\bar{v},i}$ for $1\leq i \leq k$ and for the appropriate $\bar{v}\in \mathbb{N}^{k+1}$ satisfying the inequality $||\bar{\gamma}_{\bar{v},i,0}||\leq 4g-4$. 
\end{definition}
\noindent
\textbf{Outline of the proof.} 
In order to show that $\langle J \rangle$ is the canonical ideal we will use the following proposition, for  a proof see \cite{1905.05545}.
\begin{proposition}\label{lemma 1} 
Let $J$ be 
a set of homogeneous polynomials of degree $2$ containing  the elements $G_0$ and an extra set of generators $G'$ and let $I$ be the canonical ideal.  
Assume that the hypotheses imposed by Petri's theorem in order for the canonical ideal to be generated by polynomials of degree two are fulfilled.
 If 
$\dim_L \left(S/   
\langle \init J \rangle
\right)_2\leq 3(g-1)$, then $I= \langle J \rangle$.
\end{proposition}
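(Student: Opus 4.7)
The plan is to exploit the two hypotheses---that $\langle J\rangle \subseteq I$ (since $J$ is contained in the canonical ideal) and that Petri's theorem forces $I$ to be generated in degree~$2$---to reduce the statement to a single dimension count in degree~$2$. Concretely, I will show that the given bound forces $\langle J\rangle_2 = I_2$, after which the equality of ideals is automatic: both $I$ and $\langle J\rangle$ are generated by their degree-$2$ parts (the former by Petri, the latter because $J$ itself consists of degree-$2$ polynomials), so equality in degree~$2$ propagates to all degrees.

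First I will compute $\dim_k (S/I)_2$. The Petri surjection of Theorem~\ref{petristhm} identifies $(S/I)_2$ with $H^0(X,\Omega_X^{\otimes 2})$, which has dimension $3g-3$ by Riemann--Roch applied to the bicanonical bundle (of degree $4g-4$, with $H^1$ vanishing for $g\geq 2$). Since each generator of $J$ lies in $I$ by Propositions~\ref{defofG0} and \ref{defofG1}, the quotient map $S/\langle J\rangle \twoheadrightarrow S/I$ supplies the lower bound
\[
\dim_k(S/\langle J\rangle)_2 \;\geq\; \dim_k(S/I)_2 \;=\; 3(g-1).
\]

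For the matching upper bound I will use the classical fact that the flat Gr\"obner degeneration of an ideal to its initial ideal preserves the Hilbert function, so $\dim_k(S/\langle J\rangle)_n = \dim_k(S/\init(\langle J\rangle))_n$ in every degree $n$. Combining this with the generic inclusion $\langle \init J\rangle \subseteq \init(\langle J\rangle)$ and the hypothesis of the proposition yields
\[
\dim_k(S/\langle J\rangle)_2 \;=\; \dim_k(S/\init(\langle J\rangle))_2 \;\leq\; \dim_k(S/\langle \init J\rangle)_2 \;\leq\; 3(g-1).
\]
Sandwiched between these two estimates, the finite-dimensional vector spaces $\langle J\rangle_2\subseteq I_2\subseteq S_2$ must in fact coincide: $\langle J\rangle_2 = I_2$. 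Since $I = \langle I_2\rangle$ by Petri and $\langle J\rangle = \langle\langle J\rangle_2\rangle$ by construction of $J$, we conclude $\langle J\rangle = I$.

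The main technical point to justify carefully is the Hilbert-function invariance $\dim_k(S/\langle J\rangle)_n = \dim_k(S/\init(\langle J\rangle))_n$; this is a classical consequence of the existence of a flat family interpolating between an ideal and its initial ideal with respect to a monomial order, and will simply be cited. Everything else is a clean inclusion-and-dimension bookkeeping together with Petri's theorem and Riemann--Roch.
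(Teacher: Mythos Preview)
Your argument is correct and is the standard route: sandwich $\dim_k(S/\langle J\rangle)_2$ between $3(g-1)$ from below (using $\langle J\rangle\subseteq I$, Petri surjectivity, and Riemann--Roch for $\Omega_X^{\otimes 2}$) and $3(g-1)$ from above (using Macaulay's theorem that passing to the initial ideal preserves the Hilbert function, the inclusion $\langle\init J\rangle\subseteq\init\langle J\rangle$, and the hypothesis), then conclude $\langle J\rangle_2=I_2$ and hence $\langle J\rangle=I$ since both are generated in degree~$2$. The paper itself does not give a proof of this proposition but cites \cite{1905.05545}; your argument is almost certainly what appears there.

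One small remark: the containment $\langle J\rangle\subseteq I$ is essential but is not literally listed among the hypotheses of the proposition as stated---it is implicit in the phrase ``extra set of generators.'' Your justification via Propositions~\ref{defofG0} and~\ref{defofG1} pertains to the specific application in this paper (where $G'=\bigcup G_{\bar v,i}$) rather than to the abstract statement; for the proposition to stand on its own one should simply record $J\subseteq I$ as a hypothesis. This does not affect the correctness of your reasoning.
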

In order to apply proposition \ref{lemma 1} we will show that
\begin{equation}\label{mainequality}
\left|\frac{\mathbf{A}+\mathbf{A}}{\sim}\right|
=
\dim 
\left(
\frac{S}{
 \langle \init (J)
\rangle}
\right)_2,
\end{equation}
where we already know, see eq. (\ref{associationofbasisandequivalenceclasses}), that the cardinality of the first quotient is   $\leq |H_2|=3g-3$.
We identify a $k$-basis of
 $\left(S/  \langle\init \langle J \rangle\right)_2$ with
$
\mathbb{T}^2-  
\{
 \init(f): f\in  J
 \}
$ 
 and, in order to prove equality (\ref{mainequality}), we define the map

\begin{align}\label{defofphi}
\Phi:\mathbb{T}^2- \{ \init(f): f\in  J \}
& \longrightarrow
 \frac{\mathbf{A}+\mathbf{A}}{\sim}
 \\
  \omega_{L}\omega_{K}\notag
&\longmapsto [L+K]
\end{align}




%
%

\begin{lemma}
\label{lessinAA}
If $(u_0,\ldots,u_k) \in \mathbf{A}+\mathbf{A}$ then every 
$(u_0',\ldots,u_k')$ with $0 \leq u_\nu' \leq u_\nu $ for $1\leq \nu \leq k$ is also in $\mathbf{A}+\mathbf{A}$. 
\end{lemma}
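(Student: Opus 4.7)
The plan is straightforward: take any decomposition $(u_0,\ldots,u_k)=L+K$ with $L,K\in\mathbf{A}$ and produce from it a decomposition of the smaller tuple by greedily splitting each reduced coordinate between $L$ and $K$, so that the resulting pair still sums to the target and is dominated by $L,K$ coordinatewise.

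Concretely, write $L=(i_0,\ldots,i_k)$ and $K=(j_0,\ldots,j_k)$ with $i_\nu+j_\nu=u_\nu$, and for each $\nu$ set
$$i_\nu':=\min(u_\nu',i_\nu),\qquad j_\nu':=u_\nu'-i_\nu'.$$
A short case split shows that $0\leq i_\nu'\leq i_\nu$ and $0\leq j_\nu'\leq j_\nu$: if $u_\nu'\leq i_\nu$ then $i_\nu'=u_\nu'$ and $j_\nu'=0$, while if $u_\nu'>i_\nu$ then $i_\nu'=i_\nu$ and $j_\nu'=u_\nu'-i_\nu\leq u_\nu-i_\nu=j_\nu$, using the hypothesis $u_\nu'\leq u_\nu$. (For the index $\nu=0$, where no reduction is imposed by the hypothesis, one simply keeps the original split $i_0'=i_0$, $j_0'=j_0$.)

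The final step is the observation that $\mathbf{A}$ is downward closed under the componentwise partial order: since the weights $|G_0|$ and $\bar{m}_1,\ldots,\bar{m}_k$ appearing in the definition of $||\cdot||$ in eq. (\ref{valMap}) are all positive, replacing any coordinate of an element of $\mathbf{A}$ by a smaller non-negative integer can only decrease its $||\cdot||$-value, hence preserves membership in $\mathbf{A}$. Applied to $L':=(i_0',\ldots,i_k')\leq L$ and $K':=(j_0',\ldots,j_k')\leq K$ this yields $L',K'\in\mathbf{A}$, so that $(u_0',\ldots,u_k')=L'+K'\in\mathbf{A}+\mathbf{A}$. There is no substantive obstacle here; the lemma is essentially the transfer of the downward closure of $\mathbf{A}$ to the Minkowski sum $\mathbf{A}+\mathbf{A}$ via a compatible coordinatewise splitting.
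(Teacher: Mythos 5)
Your proof is correct and follows the same essential idea as the paper's: both rest on the observation that $\mathbf{A}$ is downward closed for the coordinatewise order because the weights $|G_0|,\bar m_1,\ldots,\bar m_k$ are positive. The one genuine improvement you make is to spell out the coordinatewise splitting $i_\nu'=\min(u_\nu',i_\nu)$, $j_\nu'=u_\nu'-i_\nu'$; the paper's proof simply notes downward closure of $\mathbf{A}$ and ends with ``the result follows,'' leaving this small but necessary combinatorial step to the reader. Your handling of the index $\nu=0$ (keeping $i_0'=i_0$, $j_0'=j_0$) is consistent with how the lemma is actually invoked later, though it is worth noting that the same $\min$-splitting works uniformly for all $\nu$, so there is no need to single out $\nu=0$; the restriction to $1\le\nu\le k$ in the statement appears to be an inadvertent omission, since the paper's own proof argues over the full range $0\le\nu\le k$.
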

\begin{proof}
Since $\bar{u}=(u_0,\ldots,u_k) \in \mathbf{A}+\mathbf{A}$ there are $\bar{a}=(a_0,\ldots,a_k)$, $\bar{b}=(b_0,\ldots,b_k)$ with $\bar{u}=\bar{a}+\bar{b}$ and $\bar{a},\bar{b} \in \mathbf{A}$, that is 
$||\bar{a}||,||\bar{b}|| \leq 2g-2$. But then every $\bar{a}'$ (resp. $\bar{b}'$) with $\bar{a}'=(a_0',\ldots,a_k')$ (resp. $\bar{b}'=(b_0',\ldots,b_k')$) such that $0\leq a_\nu'  \leq a_\nu$ (resp. $0 \leq b_\nu' \leq b_\nu$) for $0 \leq \nu \leq k$ satisfies $||\bar{a}'|| \leq ||\bar{a}|| \leq 2g-2 $ (resp. $||\bar{b}'|| \leq ||\bar{b}|| \leq 2g-2$), that is 
$\bar{a}',\bar{b}' \in \mathbf{A}$. The result follows. 
\end{proof}

\noindent We start by showing that $\Phi$ is one-to-one.


\begin{lemma}\label{injectivityofphi}
$\Phi$ is 1-1.
\end{lemma}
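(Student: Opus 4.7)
Plan. Suppose $\omega_L\omega_K$ and $\omega_{L'}\omega_{K'}$ both lie in $\mathbb{T}^2 - \{\init(f) : f \in J\}$ and $\Phi(\omega_L\omega_K) = \Phi(\omega_{L'}\omega_{K'})$, equivalently $\lVert L+K\rVert = \lVert L'+K'\rVert$. My aim is to show $\omega_L\omega_K = \omega_{L'}\omega_{K'}$, and I split the argument along the dichotomy $L+K = L'+K'$ versus $L+K \neq L'+K'$.

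In the first case the difference $\omega_L\omega_K - \omega_{L'}\omega_{K'}$ lies in $G_0 \subset J$. Refining the colex order of section \ref{sec:maintheorem} by any tiebreaker among monomials of the same sum vector, exactly one representative per sum vector avoids being an initial term of a $G_0$-element, so the assumption that both monomials survive in the domain forces $\omega_L\omega_K = \omega_{L'}\omega_{K'}$.

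In the second case, write $L+K = (\alpha_0,\ldots,\alpha_k)$ and $L'+K' = (\alpha_0',\ldots,\alpha_k')$. The argument in the proof of lemma \ref{defofGamma}, applied to the identity $\lVert L+K\rVert = \lVert L'+K'\rVert$, yields $p^{n_k}\mid(\alpha_k-\alpha_k')$; if the two top components agree, the same modular reduction (now using $(b_{k-1},p)=1$, then $(b_{k-2},p)=1$, and so on) can be iterated on the reduced identity $\sum_{v<k}(\alpha_v-\alpha_v')\bar{m}_v = 0$. Since $L+K\neq L'+K'$ this process terminates at a largest index $i_0$ with $\alpha_{i_0}\neq \alpha_{i_0}'$, and gives $p^{n_{i_0}}\mid (\alpha_{i_0}-\alpha_{i_0}')$. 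Without loss of generality $\alpha_{i_0} > \alpha_{i_0}'$, so $\alpha_{i_0}\geq p^{n_{i_0}}$, and I set
\[
\bar{v} := (\alpha_0,\ldots,\alpha_{i_0-1},\alpha_{i_0}-p^{n_{i_0}},\alpha_{i_0+1},\ldots,\alpha_k) \in \mathbb{N}^{k+1},
\]
which gives $\bar{\gamma}_{\bar{v},i_0,0} = L+K$. The required bound $\lVert \bar{\gamma}_{\bar{v},i_0,0}\rVert = \lVert L\rVert + \lVert K\rVert \leq 4g-4$ is immediate from $L,K\in \mathbf{A}$, so $G_{\bar{v},i_0}\in J$; by lemma \ref{initG} its initial term has sum vector $L+K$, and combined with the $G_0$-relations on this sum class every monomial with sum $L+K$ is excluded from the domain, contradicting $\omega_L\omega_K\in \mathbb{T}^2 - \{\init(f):f\in J\}$.

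The principal obstacle is the iterative modular-divisibility step that locates $i_0$ and establishes $p^{n_{i_0}}\mid(\alpha_{i_0}-\alpha_{i_0}')$; this is the natural extension of the arguments in lemmas \ref{eqnorm} and \ref{defofGamma} to the present setting, where the components of $L+K$ and $L'+K'$ are no longer bounded by $p^{n_j}$. Once this divisibility is in hand, the construction of $\bar{v}$ and the invocation of $G_{\bar{v},i_0}$ to produce the contradiction are formal.
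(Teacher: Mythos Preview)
Your argument is correct and essentially identical to the paper's: both locate the largest index where the two sum vectors differ via the iterative divisibility argument (your $i_0$, the paper's $\ell$), set $\bar v$ so that $\bar\gamma_{\bar v,\cdot,0}$ equals the larger sum vector, and invoke lemma~\ref{initG} on $G_{\bar v,\cdot}$ for the contradiction. You are in fact slightly more explicit than the paper in handling the case $L+K=L'+K'$ separately and in noting that the colex comparison of section~\ref{sec:maintheorem} must be refined by a tiebreaker to become a genuine monomial order---a point the paper uses tacitly.
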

\begin{proof}
Consider the following elements of $ \mathbf{A}$;
\begin{align*}
L &=(i_0,i_1,\ldots,i_\ell,\ldots,i_k) &
K &=(j_0,j_1,\ldots,j_\ell,\ldots,j_k) \\
L' &=(i_0',i_1',\ldots,i_\ell',\ldots,i_k') &
K' &=(j_0',j_1',\ldots,j_\ell',\ldots,j_k') \\
\end{align*}
such that, 

$\omega_K \omega_L, \omega_{L'}\omega_{K'}$ are in $\mathbb{T}^2-\{\mathrm{in}(f): f\in J\}$. Assume that 
 $\Phi(\omega_L\omega_K)=\Phi(w_L'w_K')$, i.e. $L+K\sim L'+K'$. 
 Suppose that $i_k+j_k=i_k'+j_k'$. Then we have the following equality;
\[
	(i_0+j_0)|G_0|+\sum_{\ell=1}^k(i_\ell+j_\ell)\bar{m}_\ell=
	(i_0'+j_0')|G_0|+\sum_{\ell=1}^k(i_\ell'+j_\ell')\bar{m}_\ell
\]
from which we cancel out the last terms and divide by $p^{n_k}$ in order to have
\[
	(i_0+j_0)p^{n_1+\dots+n_{k-1}}+\sum_{\ell=1}^{k-1}(i_\ell+j_\ell)\frac{\bar{m}_\ell}{p^{n_k}}=
	(i_0'+j_0')p^{n_1+\dots+n_{k-1}}+\sum_{\ell=1}^{k-1}(i_\ell'+j_\ell')\frac{\bar{m}_\ell}{p^{n_k}}.
\]



By repeating the above process we can assume that 
there is an  $\ell \leq k$ such that
$i_\nu'+j_\nu'=i_\nu+j_\nu$ for $\ell < \nu \leq k$ and 
$i_\ell'+j_\ell' \neq i_\ell+j_\ell$ and assume without loss of generality that 
 $i_\ell'+j_\ell'>i_\ell+j_\ell$. Then by lemma (\ref{defofGamma}), we would have
\begin{equation}\label{positivelambdaequality}
i_\ell'+j_\ell'-(i_\ell+j_\ell)=\lambda p^{n_\ell}
\end{equation}
for $\lambda>0$. Using this we will show that $\omega_{L'}\omega_{K'}$ belongs to $  \init (J)$. In order to do that, we need to build an element $G_{i,\bar{v}}$ which has $\omega_{L'}\omega_{K'}$ as its initial term. In other words we look for an element of the following form;
\begin{equation}\label{redEQ}
\omega_{\bar{\gamma}_{\bar{v},i,0}}^{\otimes 2}+ \sum_{\nu=1}^{n_i}
a_{\nu}^{(i)}\omega_{\bar{\gamma}_{\bar{v},i,\nu}}^{\otimes 2}
-
\sum_
{\substack{
\lambda\in \Lambda_i \\
||\bar{\lambda}||\leq p^{n_i}\bar{m}_i
}}
 a_{\bar{h}}^{(i)}
\omega_{\bar{\beta}_{\bar{v},i,\bar{\lambda}}}^{\otimes 2},
\end{equation}
where $\omega_{\bar{\gamma}_{\bar{v},i,0}}^{\otimes 2}=\omega_{L'+K'}^{\otimes 2}$
and everything else should be as defined in proposition (\ref{defofG1}). This comes down to finding $\bar{v}=(v_0,\ldots,v_k)\in \mathbb{N}^{k+1}$ such that 
\[
   (v_0,\ldots,v_\ell+p^{n_\ell},
   v_{\ell+1}\ldots,
   v_{k}) =
   (v_0,\ldots,v_\ell+p^{n_\ell},
   i'_{\ell+1}+j'_{\ell+1} \ldots,
   i'_{k}+j'_k)
   =L'+K'.
\]
Indeed, recall that if we match our element with an initial term corresponding to $\bar{f}_\ell^{p^{n_\ell}}$ then all the other terms can be defined by the equation of the irreducible polynomial of $\bar{f}_\ell$.

Define $\bar{v}$ as follows;
\[
  v_s=
  \begin{cases}
  i_s'+j_s'&\text{for }s\neq\ell\\
  i_{\ell}'+j_{\ell}'-p^{n_{\ell}} &\text{for }s=l
  \end{cases}
\]

The element $(v_0,\dots,v_k)$ lies in $\mathbf{A}+\mathbf{A}$.
Indeed, since $L'+K'$ is in $\mathbf{A}+\mathbf{A}$, according to lemma \ref{lessinAA} we only need to show that $0 \leq v_\nu$ for all $0 \leq \nu \leq k$. The only thing that needs to be checked is whether $v_\ell$ is nonnegative. Equivalently, whether $u_\ell\geq p^{n_\ell}$. Now recall that 
$u_\ell=i_{\ell}'+j_{\ell}'=\lambda p^{n_\ell}+(i_\ell+j_\ell)$
$v_\ell=i_\ell'+j_\ell'-p^{n_\ell}=i_\ell+j_\ell+(\lambda-1)p^{n_\ell}$
 by eq. (\ref{positivelambdaequality}), and $\lambda\geq 1$ hence
\[
  \lambda p^{n_\ell}+(i_\ell+j_\ell)\geq p^{n_\ell}
\]
as expected. 

This proves that  $\omega_{L'+K'}$ is the initial term of $G_{\bar{v},\ell}$ 
for $\bar{v}=(v_0,\ldots,v_k)$,
check also lemma (\ref{initG}),  giving us 
a contradiction so the map $\Phi$ is 1-1.
\end{proof}

\begin{lemma}\label{surjectivityofphi}
The map $\Phi$
 is surjective.
\end{lemma}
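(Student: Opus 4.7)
To establish surjectivity, given any class $[M] \in (\mathbf{A}+\mathbf{A})/\sim$, the plan is to produce an explicit representative $\omega_{L'}\omega_{K'}$ satisfying $\Phi(\omega_{L'}\omega_{K'}) = [M]$. The strategy is to start from an arbitrary element $M = L_0+K_0 \in \mathbf{A}+\mathbf{A}$ in the class and perform a finite descent in the colexicographic order, using the relations collected in $J$ to reduce the representative whenever it happens to coincide with an initial term of some generator.

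At each stage, if the current representative $\omega_L\omega_K$ with $L+K=\bar{u}$ equals $\init(G_{\bar v, i})$, Lemma \ref{initG} identifies $\bar u = \bar\gamma_{\bar v, i, 0}$ and guarantees that $G_{\bar v, i}$ contains a summand $\omega_{\bar\beta_{\bar v, i, \bar\lambda}}^{\otimes 2}$ whose norm equals $||\bar u||$ but whose tuple $\bar\beta_{\bar v, i, \bar\lambda}$ is strictly smaller in the colexicographic order. I would replace $\bar u$ by $\bar\beta_{\bar v, i, \bar\lambda}$; this stays in the same $\sim$-class by construction. If instead $\omega_L\omega_K$ is an initial term of a $G_0$ generator, i.e. $L+K = L'+K'$ for a distinct pair $(L',K')$, I would swap to the decomposition $(L',K')$ selected by an agreed tie-breaking convention, thereby exiting $\{\init(f):f\in J\}$ from the $G_0$ side.

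The descent cannot continue indefinitely, since the colexicographical order is a well-ordering on $\mathbb{N}^{k+1}$. Hence it stabilizes at some tuple $\bar u^*$ whose associated monomial $\omega_{L^*}\omega_{K^*}$ no longer appears as an initial term of any element of $J$. Because the descent preserves the $\sim$-class, the conclusion $\Phi(\omega_{L^*}\omega_{K^*})=[M]$ follows immediately.

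The main obstacle is to verify that the intermediate representatives produced by these reductions remain in $\mathbf{A}+\mathbf{A}$, so that they correspond to bona fide degree-two monomials. The norm bound $||\bar\beta_{\bar v, i, \bar\lambda}|| \leq 4g-4$ is supplied by Lemma \ref{conditions}, but exhibiting a splitting $\bar\beta_{\bar v, i, \bar\lambda} = \bar v + (\ell_0,\ldots,\ell_{i-1},0,\ldots,0)$ into two parts each of norm at most $2g-2$ is more delicate; I expect this to require combining Lemma \ref{lessinAA} with the constraint $||\bar\lambda|| \leq p^{n_i}\bar m_i$ and a case analysis on how the contribution of the $\ell_\nu$ redistributes against the $v_\nu$. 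This is where I anticipate the bulk of the technical work lies.
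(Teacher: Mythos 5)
Your argument is essentially the paper's, recast as an iterative descent rather than a direct appeal to the minimum of $\Gamma_{L+K}$: both rest on the observation (recorded in Lemma \ref{initG} for the $G_{\bar v,i}$'s) that an initial term of any generator of $J$ is accompanied, inside that same generator, by a strictly smaller term with the same $||\cdot||$-value, so the smallest representative of each $\sim$-class cannot be an initial term. Two points to tighten. In your $G_0$ step the replacement $(L,K)\mapsto(L',K')$ leaves the tuple $L+K$ unchanged, so the colexicographic order on $\mathbb{N}^{k+1}$ alone does not certify termination for these steps; you need to make the tie-breaking an explicit total refinement of $\prec$ on monomials and invoke the finiteness of decompositions of a fixed tuple into a sum of two elements of $\mathbf{A}$. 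The paper avoids this extra bookkeeping by passing directly to $\min\Gamma_{L+K}$. Second, the concern you flag at the end is misplaced: membership of $\bar\beta_{\bar v,i,\bar\lambda}$ in $\mathbf{A}+\mathbf{A}$ is precisely what Proposition \ref{defofG1} builds into the definition of $G_{\bar v,i}$ and what Lemma \ref{conditions} verifies from the hypothesis $||\bar\gamma_{\bar v,i,0}||\le 4g-4$; there is no further case analysis or use of Lemma \ref{lessinAA} needed here. The surjectivity proof should simply cite those earlier results together with Lemma \ref{initG}, which already packages the ``equal norm, smaller colex'' companion term that your reduction relies on.
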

\begin{proof}
Take an equivalence class $[L+K]$ in $(\mathbf{A}+\mathbf{A})/\!\!\sim$. Recall the definition of the set  $\Gamma_{L+K}$ given in lemma  \ref{defofGamma}.
Consider the minimal element of $\Gamma_{L+K}$ , i.e. $\mathrm{min}\Gamma_{L+K}:=\omega_A\omega_B\in\mathbb{T}^2$. There is
 such 
 a minimal element since $\Gamma_{L+K}$ is nonempty (for example $\omega_L\omega_K\in \Gamma_{L+K}$) and since our order is a total order. We still need to show that $\omega_A\omega_B$ is not
  in  $  \init (J) $.


Firstly suppose that $\omega_A\omega_B\in\init(G_0)$. Then there is $\omega_I\omega_J$ such that $\omega_I\omega_J\prec\omega_A\omega_B$ and $A+B=I+J$.  By the last equality, $||A+B||=||I+J||$ so $A+B\sim I+J$. But this means that $\omega_I\omega_J$ is also in $\Gamma_{L+K}$ and is 
colexicographically
smaller than $\omega_A\omega_B$, a contradiction.

Suppose now that $\omega_A\omega_B \in \init(G_{\bar{v},i})$ for some $\bar{v},i$. Then according to lemma \ref{initG} there is a second element in the polynomial $G_{\bar{v},i}$ which has the same value when  $||\cdot||$ is applied,  but is smaller in $\prec$ (a contradiction since, having the same $||\cdot||-$value means that they are equivalent i.e. they both lie in $\Gamma_{L+K}$).
\end{proof}
\section{Examples}
We provide here some explicit examples of our method for calculating the canonical ideal of HKG curves.

\subsection{Artin-Schreier curves}
\label{sec:Artin-Schreier}Here we write down the generating sets of the canonical ideal corresponding to Artin-Schreier curves of the form  
\begin{equation}
\label{ASclassic}
X: y^{p^n}-y=x^m, \qquad (m,p)=1,
\end{equation}
where the values of  $m,p$ are given in the following table. 
	Notice that these curves form an example of an HKG-cover extension for the $k=1$ case. 
\[
\begin{array}{|c|c|}
\hline
m & \text{Petri's theorem requirement} \\
\hline
m > 5 & p^n > 3 
\\
m=4,5 & p^n \geq 5 \\
\hline
\end{array}
\] 
In this case the genus $g$ of the curve is $g>6$ and also the curve is not hyperelliptic nor trigonal.
Indeed the  above given curves have Weierstrass semigroup
 \begin{equation}
 \label{HAS}
 H:=m \mathbb{Z}_+ + p^n \mathbb{Z}_+
 \end{equation}
at the unique ramified point $P$. 
Let $G$ be the $p^n$ order Artin-Schreier cover group generated by the automorphism $\tau: y\mapsto y+1, x\mapsto x$.
Assume that there is a degree two covering 
$X \rightarrow \mathbb{P}^1$. This is a Galois covering with Galois group generated by the hyperelliptic involution $j:X \rightarrow X$. The hyperelliptic involution cannot be in the $p$-order Galois group $G$ of the 
Artin-Schreier extension, since $p$ is odd. On the other hand it is well known that the hyperelliptic involution is in the center of the automorphism group of $X$, \cite{BraStich86}. 
Since  $\tau(j(P))=j \tau(P)=P$ we have $j(P)=P$, otherwise the Galois cover $X\rightarrow X/G=\mathbb{P}^1$ has two ramified points, a contradiction. But then $2$ should be a pole number 
of the semigroup $H$, contradicting eq. (\ref{HAS}).

In order  prove that $X$ is 
also not trigonal,
 we can employ the fact that with the assumptions given in the table above we can indeed find a quadratic basis of the canonical ideal. Alternatively we can argue as follows: In characteristic zero we know that  at a non ramified point in the degree 3 cover $X\rightarrow \mathbb{P}^1$ 
of a trigonal curve either $3n$ or $2n$ is a pole number
for $(g-1)/n \leq n \leq g/2$, see \cite{MR1043748}. 
On the other hand for a Weierstrass point of the trigonal curve we have 
than every $k\geq s+2$ is a pole number, where $g-1 \leq s \leq 2g-2$. 
Lefschetz principle implies that this is the structure of Weierstrass semigroups for a big enough prime $p$. On the other hand, the  ramified point $P$ in the Artin-Schreier cover is a Weierstrass point, see \cite[th. 1]{garciaelab}. The semigroup strucure at $P$ given in eq. (\ref{HAS})
is not compatible with any of the Weierstrass semigroups of trigonal curves, therefore the curve $X$ is not trigonal at least for big enough $p$. 


Recall that $H_i$  denotes the bounded parts of the 
Weierstrass semigroup. 
For the case at hand we
 have that 
\begin{align*}
|H_1| & =g=(m-1)(p^n-1)/2 \\
|H_2| & =3(g-1).
\end{align*}
Also $\mathbf{A}=\{
L:=(i_0,i_1):  i_0 p^n +i_1 m \leq 2(g-1)
\}$
and
\[
\mathbf{A}+\mathbf{A}=\{L+K=(i_0+j_0,i_1+j_1) \mid L:=(i_0,i_1)\in \mathbf{A},\, K:=(j_0,j_1)\in \mathbf{A}\}.
\]

The equivalence class of  $L+K\in \mathbf{A}+\mathbf{A}$, as described in lemma (\ref{defofGamma}),corresponds to the following set of degree 2 monomials
\[
\Gamma_{L+K}=\{\omega_{A}\omega_{B}\in \mathrm{Sym }H^0(X,\Omega_X):A+B-(L+K)=(\lambda m,-\lambda p^n) \text{ for some }\lambda\in \mathbb{Z}\}.
\]
According to proposition \ref{defofG0} $G_0$ is defined by
\begin{equation*}
G_0 :=
\{\omega_L\omega_K-\omega_{L'}\omega_{K'}\in \mathrm{Sym }H^0(X,\Omega_X): L+K=L'+K', L,K,L',K' \in \mathbf{A}\}.
\end{equation*} 
The sets $G_{\bar{v},i}$ containing the information of the Artin-Schreier extension now adopt the following, much simpler form;
\[
  G_{(v_0,v_1),1}=\left\{
\omega_{(v_0,v_1+p^{n})}^{\otimes 2}-
\omega_{(v_0,v_1+1)}^{\otimes 2}
-
\omega_{(v_0+m,v_1)}^{\otimes 2}
\right\}
\]
for the $\bar{v}:=(v_0,v_1)$ satisfying $||(v_0,v_1+p^n)||\leq4g-4$, equivalently,
\[
  v_0p^n+v_1m+p^nm\leq 4g-4.
\]
Notice that if $p,n$ and $m$ are given specific values, the last inequality can be solved explicitly and the generating sets can be written down.

\begin{example}
Consider the Artin-Schreier curve $y^7-y=x^4$ of genus $9$. The canonical ideal is generated by 
the set $G_0$ given by 
\tiny{
\begin{multline*}
\{-w_{04} w_{10} + w_{03} w_{11}, -w_{10} w_{11} + w_{01} w_{20}, w_{04} w_{10} - w_{03} w_{11}, w_{10} w_{11} - w_{01} w_{20},
 -w_{02}^{2} + w_{01} w_{03}, w_{02}^{2} - w_{01} w_{03},
 \\
  -w_{01} w_{11} + w_{00} w_{12}, 
 w_{01} w_{11} - w_{00} w_{12},
 -w_{02} w_{11} + w_{01} w_{12}, w_{02} w_{11} - w_{01} w_{12}, -w_{11} w_{12} + w_{03} w_{20}, -w_{02}^{2} + w_{00} w_{04}, 
\\
 w_{02}^{2} - w_{00} w_{04}, w_{11} w_{12} - w_{03} w_{20},
  -w_{01}^{2} + w_{00} w_{02}, w_{01}^{2} - w_{00} w_{02}, -w_{03} w_{11} + w_{02} w_{12}, -w_{11}^{2} + w_{02} w_{20},
\\
   w_{03} w_{11} - w_{02} w_{12}, -w_{10} w_{12} + w_{02} w_{20}, w_{10} w_{12} - w_{02} w_{20}, w_{11}^{2} - w_{02} w_{20}, -w_{02} w_{10} + w_{00} w_{12}, w_{02} w_{10} - w_{00} w_{12},
\\
   w_{03} w_{10} - w_{02} w_{11}, -w_{03} w_{10} + w_{01} w_{12}, 
w_{03} w_{10} - w_{01} w_{12}, -w_{02} w_{10} + w_{01} w_{11}, -w_{10}^{2} + w_{00} w_{20}, w_{02} w_{10} - w_{01} w_{11}, 
\\
w_{10}^{2} - w_{00} w_{20},
 -w_{11}^{2} + w_{10} w_{12}, w_{11}^{2} - w_{10} w_{12}, -w_{01} w_{02} + w_{00} w_{03}, -w_{04} w_{11} + w_{03} w_{12}, w_{04} w_{11} - w_{03} w_{12}, 
\\
 -w_{02} w_{03} + w_{01} w_{04},
  w_{02} w_{03} - w_{01} w_{04}, -w_{03}^{2} + w_{02} w_{04}, w_{03}^{2} - w_{02} w_{04}, -w_{01} w_{03} + w_{00} w_{04}, w_{01} w_{03} - w_{00} w_{04},
\\
   -w_{04} w_{10} + w_{02} w_{12},
 w_{04} w_{10} - w_{02} w_{12}, -w_{01} w_{10} + w_{00} w_{11}, w_{01} w_{10} - w_{00} w_{11}, w_{12}^{2} - w_{04} w_{20},
\\
 -w_{12}^{2} + w_{04} w_{20}, w_{01} w_{02} - w_{00} w_{03}, -w_{03} w_{10} + w_{02} w_{11}
 \}
\end{multline*}
}
and one trinomial 
\[
-w_{00} w_{01} + w_{03} w_{04} - w_{20}^{2}
\]
\end{example}

 \subsection{HKG-covers with $p$-cyclic group}
This is a case where all the intermediate subextensions $F_i/F_{i-1}$ are of degree $p$ and the corresponding irreducible polynomials are
\[
  X^p+a^{(i)}X-D_i
\]
In this case the generating sets of the canonical ideal are
\begin{equation*}
G_0 :=
\{\omega_L\omega_K-\omega_{L'}\omega_{K'}\in \mathrm{Sym }H^0(X,\Omega_X): L+K=L'+K', L,K,L',K' \in \mathbf{A}\}
\end{equation*}

\begin{equation}
G_{\bar{v},i}:=\left\{
\omega_{(v_0,\dots,v_i+p,\dots,v_k)}^{\otimes 2}+
a^{(i)}\omega_{(v_0,\dots,v_i,\dots,v_k)}^{\otimes 2}
-
\sum_
{\substack{
\bar{\lambda}\in\mathbf{A}+\mathbf{A}\\
||\bar{\lambda}||\leq 
p \bar{m}_i 
}}
 a_{\bar{h}}^{(i)}
\omega_{\bar{\beta}_{\bar{v},i,\bar{\lambda}}}^{\otimes 2}
\right\}
\end{equation} 
such that
$||\bar{\gamma}_{\bar{v},i,0}||\leq 4g-4$
where $\bar{\beta}_{\bar{v},i,\bar{\lambda}} = 
(l_0,\dots,l_{i-1}, 0, \ldots,0) +\bar{v}$
as defined before.

 \def\cprime{$'$}

\end{document}